\newtheorem{theorem}{Theorem}[section]
\newtheorem{lemma}[theorem]{Lemma}
\newtheorem{prop}[theorem]{Proposition}
\newtheorem{cor}[theorem]{Corollary}
\theoremstyle{definition}
\newtheorem{definition}[theorem]{Definition}
\newtheorem{remark}[theorem]{Remark}
\newtheorem{question}[theorem]{Question}
\newcommand{\abs}[1]{{\bigl\lvert #1\bigr\rvert}}
\newcommand{\connsum}{{\mathbin{\#}}}
\renewcommand{\d}{\partial}
\newcommand{\lcan}{{\lambda_{\mathrm{can}}}}
\newcommand{\lie}[1]{{\mathcal{L}_{#1}}}
\newcommand{\Exo}{{\mathrm{exo}}}
\newcommand{\p}{{\mathbf p}}
\newcommand{\PS}{{\mathrm{PS}}}
\newcommand{\q}{{\mathbf q}}
\newcommand{\Stab}{{\mathrm{stab}}}
\newcommand{\Std}{{\mathrm{std}}}
\newcommand{\D}{\mathcal{D}}
\newcommand{\PP}{\mathcal{P}}
\newcommand{\CC}{\mathbb{C}}
\newcommand{\RR}{\mathbb{R}}
\newcommand{\ZZ}{\mathbb{Z}}
\newcommand{\restricted}[2]{{\left.{#1}\right|_{#2}}}
\DeclareMathOperator{\GL}{GL}
\DeclareMathOperator{\tb}{tb}
\DeclareMathOperator{\U}{U}
\DeclareMathOperator{\Span}{Span}
\newcommand{\defin}[1]{\textbf{#1}}
\begin{document}

\author[E.\ Murphy]{Emmy Murphy}
\address[E. Murphy]{Massachusetts Institute of Technology
  Department of Mathematics \\
  77 Massachusetts Avenue \\
  Cambridge, MA 02139, USA}
\email{e\_murphy@mit.edu}

\author[K.\ Niederkrüger]{Klaus Niederkrüger}
\address[K.\ Niederkrüger]{
  Institut de mathématiques de Toulouse\\
  Université Paul Sabatier -- Toulouse III\\
  118 route de Narbonne\\
  F-31062 Toulouse Cedex 9\\
  FRANCE}
\email{niederkr@math.univ-toulouse.fr}

\author[O.\ Plamenevskaya]{Olga Plamenevskaya}
\address[O.\ Plamenevskaya]{Department of Mathematics, SUNY Stony
  Brook, Stony Brook, NY 11794, USA}
\email{olga@math.sunysb.edu}

\author[A.\ I.\ Stipsicz]{András I. Stipsicz}
\address[A.\ I.\ Stipsicz]{Rényi Institute of Mathematics, Budapest,
  Reáltanoda utca 13-15., HUNGARY}
\email{stipsicz.andras@renyi.mta.hu}

\title{Loose Legendrians and the plastikstufe}

\begin{abstract}
  We show that the presence of a plastikstufe induces a certain degree
  of flexibility in contact manifolds of dimension $2n+1>3$.
  More precisely, we prove that every Legendrian knot whose complement
  contains a ``nice'' plastikstufe can be destabilized (and, as a
  consequence, is loose).
  As an application, it follows in certain situations that two
  non-isomorphic contact structures become isomorphic after
  connect-summing with a manifold containing a plastikstufe.
\end{abstract}

\maketitle

%\red{Klaus:  Sometimes we write $L$, sometimes $\Lambda$ for a Legendrian.  Should this be all %somehow unified?}

\section{Introduction} 
\label{sec:intro}

In dimension $3$, it has been known for a long time that contact
structures containing a topological object called an
\emph{overtwisted} disk are ``flexible'', in the sense that two
overtwisted contact structures which are homotopic as oriented
$2$-plane fields are also isotopic \cite{EliashbergClassification}.
Often this property is phrased as saying that overtwisted contact
structures on $3$-manifolds satisfy an $h$-principle.
A second important property of all contact structures containing
overtwisted disks is that such contact $3$-manifolds are not
symplectically fillable.
In higher dimensions, the quest for the right definition of
``overtwisted'' contact structures has been going on for a while, but
we are far from having a definitive answer.
On the one hand, there is a variety of special submanifolds,
plastikstufes and bLobs \cite{Nie, MNW}, whose presence in a contact
manifold is known to obstruct fillability.
On the other hand, certain related objects possess flexibility
properties desired for "overtwisted" contact structures.
More specifically, a class of Legendrian knots, called ``loose''
Legendrians, satisfies a version of $h$-principle \cite{Mur}, and
Weinstein cobordisms obtained by attaching symplectic handles along
loose knots are governed by a ``symplectic $h$-cobordism theorem''
\cite{CE}.
(Roughly speaking, loose Legendrian knots in higher dimensional
contact manifolds are those that contain a sufficiently wide ``kink''
whose projection to a $3$-dimensional subspace is a stabilized
Legendrian arc.
See Section~\ref{sec:loose} for a precise definition.)
In this paper, we hope to shed some light on flexibility in high
dimensions by studying loose Legendrians in contact manifolds
containing a plastikstufe.
(The latter is a foliated submanifold of maximal dimension that is a
product of an overtwisted disk and a closed manifold.)
Our main result is the following 

\begin{theorem}\label{theorem: p-loose}
  Let $(M^{2n+1}, \xi)$ be any contact manifold containing a small
  plastikstufe~$\PP_B$ with spherical binding and trivial rotation.
  Then any Legendrian in $(M, \xi)$ which is disjoint from $\PP_B$ is
  loose.
\end{theorem}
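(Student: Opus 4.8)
The plan is to exploit the overtwisted disk hidden inside $\PP_B$ as a reservoir of ``extra room'' into which a finger of an arbitrary Legendrian can be pushed and there bent into a loose chart (equivalently, exhibiting the Legendrian as a stabilization). The first task is a contact neighborhood theorem: identify $\Op(\PP_B)$ with an explicit model. The \emph{trivial rotation} hypothesis is precisely what decouples the spherical binding $B = S^{n-1}$ from the disk factor, so that the model is, up to a controlled perturbation, the product of a neighborhood of an \emph{honest} overtwisted $2$-disk in a contact $3$-manifold with a neighborhood of the zero section of $T^{*}S^{n-1}$ (plus an $\RR$-factor); the word \emph{small} ensures this whole model sits inside a single Darboux-type chart in the expected way. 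Inside the model one then isolates an open region $R \subset \Op(\PP_B)\setminus\PP_B$, disjoint from $\PP_B$ itself but lying ``on the far side'' of the overtwisted disk, in which all subsequent constructions take place.

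The heart of the argument is a local statement, call it ``a loose chart on demand'': every trivially embedded Legendrian disk $\DD^{n}\hookrightarrow R$ is Legendrian isotopic, rel its boundary and through Legendrians disjoint from $\PP_B$, to a Legendrian disk containing a loose chart. To prove this I would reduce to the three-dimensional slices $\DD^{2}_{\mathrm{ot}}\times\{b\}$, $b\in B$, of the model: an embedded Legendrian arc running alongside an overtwisted disk can be Legendrian isotoped, \emph{using that disk}, to an arc carrying a wide double zig-zag. This is the only point where the genuine overtwistedness (as opposed to a ``fake'' disk with the wrong characteristic foliation) is really used, and it is what forces the zig-zag to be wide relative to the ambient ball. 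Taking the product of this picture with the cotangent-fibre directions of $S^{n-1}$ and rescaling then reproduces exactly Murphy's model loose chart $\Lambda_{\mathrm{zig}}\times\DD^{n-1}$ with the required size inequality. The crucial sanity check is that such an isotopy exists near $\PP_B$ while it does \emph{not} exist in a tight Darboux ball --- this is what prevents the argument from (falsely) proving that every Legendrian is loose.

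To globalize: given an arbitrary Legendrian $\Lambda$ with $\Lambda\cap\PP_B=\emptyset$, choose an embedded arc from $\Lambda$ to $R$ inside $M\setminus\PP_B$ and use a thin Weinstein tube along it to push a small Legendrian disk $\Delta\subset\Lambda$ into $R$ by an ambient Legendrian isotopy of $\Lambda$ supported in $M\setminus\PP_B$ (this is harmless, since $\PP_B$ is never touched). Now $\Delta$ is a trivial Legendrian disk sitting inside $R$, so by the local statement above it may be isotoped rel $\partial\Delta$, still avoiding $\PP_B$, to a disk containing a loose chart. The resulting Legendrian is Legendrian isotopic to $\Lambda$ and manifestly contains a loose chart, hence $\Lambda$ is loose (and can be destabilized). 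Equivalently one may phrase this as: form the Legendrian connected sum $\Lambda\connsum\Lambda_{0}$ with a loose Legendrian sphere $\Lambda_{0}$ placed near $\PP_B$, and observe that the plastikstufe allows the new summand to be absorbed back into $\Lambda$, leaving a loose chart behind.

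The main obstacle is the middle step. The delicate points are: verifying that a \emph{small} plastikstufe genuinely supplies enough room to realize the size inequality in the definition of a loose chart; and checking that the binding direction, together with the trivial-rotation normalization, can be carried along so that the construction is honestly a small perturbation of a product (so that the loose-chart coordinates actually exist). Fitting the three-dimensional zig-zag creation and its parametrized version together inside the explicit model $\Op(\PP_B)$, with all sizes under simultaneous control, is where the real work lies.
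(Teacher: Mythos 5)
Your overall architecture (bring a piece of the Legendrian next to the plastikstufe, run the $3$-dimensional destabilization fiberwise over $B=S^{n-1}$, then rescale to get the size condition $\rho>1$) matches the paper's, and your last step --- shrinking the zig-zag and reparametrizing so that the $D^*S^{n-1}$-factor becomes a $V_\rho$ with $\rho>1$ --- is essentially the paper's final lemma. But there is a genuine gap at what you yourself call the heart of the argument, and it stems from a misreading of the hypotheses. The \emph{trivial rotation} condition is not about the neighborhood of $\PP_B$: a plastikstufe \emph{always} has a standard product neighborhood $\RR^3_{ot}\times T^*B$, with no hypothesis needed. What is not automatic is that your Legendrian $\Lambda$ can be made compatible with that product structure. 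Your plan to ``reduce to the three-dimensional slices $\D_{ot}\times\{b\}$'' presupposes that the Legendrian disk $\Delta$ you pushed into $R$ meets each slice in a Legendrian arc, i.e.\ that some annular strip $S^{n-1}\times I\subset\Delta$ sits as a product $K\times\{0\text{-section}\}$. A generic Legendrian disk does not have this form, and producing such a strip is the central technical step (Lemma~4.7 of the paper): one must show that a punctured-disk strip in $\Lambda$ is Legendrian isotopic to a leaf ribbon of $\PP_B$. The obstruction to even a \emph{formal} such isotopy is exactly the relative rotation class between the leaf ribbon and the punctured Legendrian disk --- this is where ``trivial rotation'' and ``spherical binding'' are consumed (for $B=S^{n-1}$ the class lives in $\pi_{n-1}(\U(n))$, which already vanishes when $n$ is odd) --- and upgrading the formal isotopy to an actual one requires Gromov's $h$-principle for subcritical isotropic embeddings applied to the core sphere, followed by a framing argument to thicken the isotopy of spheres back to an isotopy of Legendrian annuli. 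None of this appears in your proposal; you flag ``carrying along the binding direction'' as a delicate point but treat it as a perturbation of the neighborhood model rather than as a formal-isotopy problem for the Legendrian itself.

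A secondary inaccuracy: you suggest the overtwisted disk ``forces the zig-zag to be wide relative to the ambient ball,'' and that this is where genuine overtwistedness enters. In the paper the width condition $\rho>1$ is obtained purely by the anisotropic rescaling $(x_i,y_i,z)\mapsto(x_i/\epsilon,y_i/\epsilon,z/\epsilon^2)$ after shrinking the stabilization inside $D^3_\Std$ while keeping the full $D^*S^{n-1}$-factor; the role of the overtwisted disk is only to produce the stabilization in each $3$-dimensional slice (via the convex-surface argument of Theorem~2.1, connect-summing with $\d\D_{ot}$). So the size inequality is not where overtwistedness is used, and your ``sanity check'' is aimed at the wrong step: the step that fails in a tight ball is the fiberwise destabilization, not the rescaling.
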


\noindent (The definitions of the various terms in the above theorem
are deferred to Sections~\ref{sec:plastikstufe} and \ref{sec:loose}.)
Examples of $PS$-overtwisted contact manifolds satisfying the
hypotheses of Theorem~\ref{theorem: p-loose} have been constructed in
\cite{EP}.
The proof of Theorem~\ref{theorem: p-loose} is based on the proof of
the corresponding statement concerning overtwisted contact
$3$-manifolds, together with a certain isotopy gained from a classical
$h$-principle \cite{PDR} to bring a part of the given Legendrian with
respect to the plastikstufe into product position.
As a consequence of the above theorem, we establish a flexibility result motivated by a familiar
$3$-dimensional fact: if any two contact structures on a $3$-manifold
are homotopic as plane fields, they become isotopic as contact
structures after connect-summing with any overtwisted contact
manifold.
The following theorem is a corollary of Theorem~\ref{theorem: p-loose}
and of the results of Cieliebak-Eliashberg on flexible Weinstein
cobordisms \cite{CE}.
Different versions of the symplectic $h$-cobordism theorem
\cite{CE} imply several versions of our result below.
\begin{theorem}\label{theorem: exo=std}
  Consider two contact structures $\xi_0, \xi_1$ on a manifold $Y$ of
  dimension $2n+1>3$.
  Let $(M, \xi_\PS)$ be a simply connected manifold containing a small
  plastikstufe with spherical binding and trivial rotation.
  Assume that one of the following conditions holds:
  \begin{enumerate}
  \item There exists a manifold~$W$ of dimension $2n+2$ with $\d W=Y$,
    and $W$ carries two Stein structures~$J_0$, $J_1$ such that $(W,
    J_0)$ is a filling for $\xi_0$, $(W, J_1)$ is a filling for
    $\xi_1$, and $J_0$, $J_1$ are homotopic through almost complex
    structures, or
  \item There exists a Stein cobordism $(W, J)$
    from $(Y, \xi_0)$ to $(Y, \xi_1)$ such that $W$ is smoothly a
    product cobordism $Y\times [0,1]$.
  \end{enumerate}
  Then $(M, \xi_\PS) \connsum (Y, \xi_0)$ is contactomorphic to $(M,
  \xi_\PS) \connsum (Y, \xi_1)$.
\end{theorem}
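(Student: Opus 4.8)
The plan is to reduce both cases to the flexibility theory for Weinstein cobordisms of Cieliebak--Eliashberg \cite{CE}, using Theorem~\ref{theorem: p-loose} to supply the looseness that these results require. The common mechanism is this: I will realize $(M,\xi_{\PS})\connsum(Y,\xi_0)$ and $(M,\xi_{\PS})\connsum(Y,\xi_1)$ either as the two ends of a single Weinstein cobordism, or as the positive ends of two Weinstein cobordisms with a common negative end, always built so that every handle attachment (and, in Case (2), every handle slide and handle-trading move) takes place in the complement of a fixed neighborhood of $\PP_B$. Then $\PP_B$ will persist, unchanged, as a small plastikstufe with spherical binding and trivial rotation in every intermediate contact manifold, and every Legendrian attaching sphere that ever appears will be disjoint from $\PP_B$, hence \emph{loose} by Theorem~\ref{theorem: p-loose} -- and loose even in the complement of the other attaching spheres, whose union still misses $\PP_B$. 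This is exactly the input the flexibility machinery of \cite{CE} needs.

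For Case (1), I would set $V_i=(W,J_i)$, view it as a Weinstein cobordism from $\emptyset$ to $(Y,\xi_i)$, and form a Weinstein cobordism $C_i$ from $(M,\xi_{\PS})$ to $(M,\xi_{\PS})\connsum(Y,\xi_i)$ by taking the disjoint union of a piece $[0,1]\times M$ of the symplectization of $(M,\xi_{\PS})$ with $V_i$ and attaching, away from $\PP_B$, an index-$1$ Weinstein handle joining the two positive boundary components. After rearranging handles by index, the subcritical handles of $C_i$ can all be attached off $\PP_B$, so the intermediate contact manifold they produce still contains $\PP_B$; the critical handles (index $n+1$) are then attached along a Legendrian link contained in the $V_i$-part, hence disjoint from $\PP_B$, hence loose -- so $C_i$ is \emph{flexible}. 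Since $C_0$ and $C_1$ have the same underlying smooth cobordism and the same negative end, and $J_0\simeq J_1$ through almost complex structures while the two constructions agree near the negative end, their formal Weinstein structures are homotopic rel $\partial_-$; the $h$-principle for flexible Weinstein cobordisms \cite{CE} then makes $C_0$ and $C_1$ Weinstein homotopic rel $\partial_-$, and integrating that homotopy yields the desired contactomorphism of positive ends.

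For Case (2), I would first isotope so that $(W,J)$ carries a Weinstein handle decomposition supported in the complement of a ball in $Y$, implant those handles into the $(Y,\xi_0)$-summand of $(M,\xi_{\PS})\connsum(Y,\xi_0)$ away from both the connect-sum neck and $\PP_B$, and thereby obtain a Weinstein cobordism $\widetilde W$ -- a boundary connected sum of $W$ with $[0,1]\times M$ -- from $(M,\xi_{\PS})\connsum(Y,\xi_0)$ to $(M,\xi_{\PS})\connsum(Y,\xi_1)$, which is smoothly the product $(M\connsum Y)\times[0,1]$. Then I would run the Weinstein handle-cancellation calculus of \cite{CE} on $\widetilde W$: its only possible obstruction, the looseness of the attaching sphere of a critical handle produced along the way, never arises because every such sphere can be kept away from $\PP_B$, so Theorem~\ref{theorem: p-loose} applies at each stage. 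This reduces $\widetilde W$ to the trivial cobordism up to Weinstein deformation, and hence identifies its two ends.

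The hard part will not be the conceptual step -- ``connect-summing with $\PP_B$ flexibilizes everything in sight'' is immediate from Theorem~\ref{theorem: p-loose} -- but the bookkeeping around it. I expect the main work to be: (a) genuinely confining all the cobordism surgeries to the complement of a fixed neighborhood of $\PP_B$, so that $\PP_B$ really does persist verbatim (still small, still with spherical binding and trivial rotation) in every intermediate contact manifold and in the complement of every Legendrian link I need loose -- the smallness of $\PP_B$ should help here; and (b) matching hypotheses (1) and (2) to the precise statements in \cite{CE} -- for (1), upgrading the homotopy of almost complex structures to a homotopy of formal Weinstein structures on the $C_i$ rel $\partial_-$ (this is where the simple connectivity of $M$ and the standard identification of the formal data of a Stein structure with its almost complex structure enter), and for (2), checking that a smoothly product Stein cobordism feeds the cancellation calculus the smooth input it needs. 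Everything else should be a direct appeal to \cite{CE}.
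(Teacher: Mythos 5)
Your proposal is correct and follows essentially the same route as the paper: connect-sum $(M,\xi_\PS)$ level-set-wise onto the Stein/Weinstein cobordism away from all descending manifolds, observe that every index-$(n+1)$ attaching sphere then lies in the complement of the persisting plastikstufe and is therefore loose by Theorem~\ref{theorem: p-loose}, and conclude via the Cieliebak--Eliashberg $h$-principle for flexible Weinstein cobordisms (Theorem~\ref{thm: symphcob} for hypothesis~(2), Theorem~\ref{thm: two-Wstructures} for hypothesis~(1)). Your more explicit handling of Case~(1) via cobordisms with negative end $(M,\xi_\PS)$ and a rel-$\partial_-$ $h$-principle is just one of the interchangeable ``versions of the symplectic $h$-cobordism theorem'' the paper itself invokes.
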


According to  \cite{CE}, the requirement about the existence of Stein structures on $W$ can be exchanged to Weinstein structures. (For the definition of a 
Weinstein structure, see Section~\ref{sec:loose}.) Indeed, a Stein structure induces a 
Weinstein structure, and any Weinstein structure (after possibly a  homotopy through
Weinstein structures) can be induced by a Stein structure. In addition,
two Stein structures are homotopic (through Stein structures) if and only if 
the induced Weinstein structures are homotopic (through Weinstein structures).
In conclusion, in the above theorem the two notions are interchangeable.
In fact, in accordance with the results we quote from \cite{CE}, the proofs in Section~\ref{sec:loose}
will be phrased in the language of Weinstein manifolds, and we appeal to the
above interchangeability principle.
Unlike the $3$-dimensional situation, we are unable to prove any
flexibility result in the absence of Stein/Weinstein fillings or cobordisms.
%

%\begin{example}
Mark McLean \cite{Mc} constructed an infinite family of examples of
exotic Stein structures of finite type on $\RR^{2n+2}$ for
$n>1$. (Further such examples are given in \cite{AS}.)
A sphere $S^{2n+1}$ that encloses all the critical points of the
plurisubharmonic Morse function in any of the above exotic Stein
$\RR^{2n+2}$ carries a contact structure $\xi_\Exo$ filled by a
non-standard Stein ball.
It is known \cite{Mc, AS} that $(S^{2n+1}, \xi_\Exo)$ is not
contactomorphic to $(S^{2n+1}, \xi_\Std)$. (Actually, there are
infinitely many distinct contact spheres among these exotic ones.)
Now, let $(S^{2n+1}, \xi_\PS)$ be a sphere containing a plastikstufe
as in Theorem~\ref{theorem: exo=std} (the existence of which, with the
right rotation class in every dimension, will be checked in
Section~\ref{sec:hprinciples}, cf. Proposition~\ref{prop:
  PSwithZeroTwist}).
\begin{cor}
  With the notations as above, $(S^{2n+1}, \xi_\Exo \connsum \xi_\PS)$
  is contactomorphic to $(S^{2n+1}, \xi_\Std \connsum \xi_\PS) \cong
  (S^{2n+1}, \xi_\PS)$. 
\end{cor}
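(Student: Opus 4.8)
\emph{Proof idea.} The plan is to deduce the Corollary directly from Theorem~\ref{theorem: exo=std}, applied with $Y = S^{2n+1}$, $\xi_0 = \xi_\Exo$, $\xi_1 = \xi_\Std$, and $(M, \xi_\PS) = (S^{2n+1}, \xi_\PS)$. The sphere $S^{2n+1}$ is simply connected, and by Proposition~\ref{prop: PSwithZeroTwist} it carries a small plastikstufe with spherical binding and trivial rotation, so the standing hypothesis on $(M, \xi_\PS)$ is satisfied. It then remains to check condition~(1) of the theorem. For this I would take $W = D^{2n+2}$, equipped on the one hand with the standard Stein structure $J_1$, for which $\d(W, J_1) = (S^{2n+1}, \xi_\Std)$, and on the other hand with the exotic Stein structure $J_0$ obtained by restricting McLean's plurisubharmonic exhaustion of $\RR^{2n+2}$ to a sublevel set enclosing all of its critical points: as recalled above (following \cite{Mc}) this sublevel set is a smooth ball whose contact boundary is $(S^{2n+1}, \xi_\Exo)$, so both $J_0$ and $J_1$ live on the same smooth manifold $D^{2n+2}$ and fill $\xi_\Exo$, respectively $\xi_\Std$.

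Next I would verify that $J_0$ and $J_1$ are homotopic through almost complex structures. Both are complex structures, hence both induce the standard orientation of $D^{2n+2}$; thus each is a section of the bundle of orientation-compatible complex structures on $TW$, whose fibre (homotopy equivalent to $\mathrm{SO}(2n+2)/\U(n+1)$) is connected. Since $D^{2n+2}$ is contractible, the space of such sections is connected, so $J_0$ and $J_1$ are indeed homotopic through almost complex structures. Condition~(1) is therefore met, and Theorem~\ref{theorem: exo=std} produces a contactomorphism
\[
  (S^{2n+1}, \xi_\PS) \connsum (S^{2n+1}, \xi_\Exo) \;\cong\; (S^{2n+1}, \xi_\PS) \connsum (S^{2n+1}, \xi_\Std).
\]
To conclude, I would invoke the standard fact that $(S^{2n+1}, \xi_\Std)$ is a unit for the contact connected sum — removing a Darboux ball and gluing in a standard contact sphere recovers the original contact manifold — so the right-hand side is contactomorphic to $(S^{2n+1}, \xi_\PS)$, while $S^{2n+1} \connsum S^{2n+1}$ is diffeomorphic to $S^{2n+1}$, so all the manifolds involved are spheres. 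This is precisely the asserted chain $(S^{2n+1}, \xi_\Exo \connsum \xi_\PS) \cong (S^{2n+1}, \xi_\Std \connsum \xi_\PS) \cong (S^{2n+1}, \xi_\PS)$.

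The argument is essentially bookkeeping once Theorem~\ref{theorem: exo=std} is in hand; the only point that deserves a moment's care is the homotopy of almost complex structures, and that is immediate from contractibility of the ball. It is worth emphasizing that this does \emph{not} conflict with the non-contactomorphism of $(S^{2n+1}, \xi_\Exo)$ and $(S^{2n+1}, \xi_\Std)$: the distinguishing invariant there (symplectic homology of the filling) is not an invariant of the homotopy class of $J$, so there is no obstruction to carrying out the step above.
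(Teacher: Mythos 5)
Your argument is correct and follows essentially the same route as the paper: the authors likewise deduce the Corollary by applying Theorem~\ref{theorem: exo=std} with condition~(1) to the standard and non-standard Stein balls filling $\xi_\Std$ and $\xi_\Exo$ (they also note condition~(2) via the punctured exotic ball as an alternative). Your extra verification that the two Stein structures are homotopic as almost complex structures via contractibility of the ball, and that $(S^{2n+1},\xi_\Std)$ is a unit for connected sum, simply fills in details the paper leaves implicit.
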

\begin{proof}
  This is an immediate consequence of the previous theorem: one could
  use either condition~(1), applied to fillings by the standard and
  non-standard Stein ball, or condition~(2), applied to the cobordism
  obtained by puncturing the non-standard Stein ball.
\end{proof}
%\end{example}

%
The paper is organized as follows.
In Section~\ref{sec:otdisk}, we quickly review the $3$-dimensional
case: if $M^3$ is overtwisted, every Legendrian knot~$L \subset M$
disjoint from an overtwisted disk can be destabilized.
The destabilization is realized by taking a connected sum with the
boundary of an overtwisted disk.
(While this seems to be a ``folklore'' statement, explicit proofs are
absent from the literature.)
The definition and properties of plastikstufes are reviewed in
Section~\ref{sec:plastikstufe}.
In Sections~\ref{sec:hprinciples} and \ref{sec:loose} we 
recall the definition of loose knots and finally 
prove Theorem~\ref{theorem: exo=std}.

\subsection*{Acknowledgments}

The main results of the present work were found when the authors
visited the American Institute of Mathematics (AIM), as participants
of the \emph{"Contact topology in higher dimensions"} workshop.
We would like to thank AIM for their hospitality, the ESF and the NSF
for the support, and other workshop participants for stimulating conversations.
We are grateful to John Etnyre for useful conversations and e-mail
correspondence. The third author would also like to thank Yasha Eliashberg, Ko Honda and Thomas Vogel for answering some questions.
KN was partially supported by Grant \emph{ANR-10-JCJC 0102} of the
\emph{Agence Nationale de la Recherche}. 
OP was partially supported by NSF grant DMS-1105674.
AS was partially supported by ERC Grant LDTBud and by \emph{ADT
  Lend\"ulet}.

\section{The $3$-dimensional case: sliding over an overtwisted
  disk}\label{sec:otdisk}

In this section  we prove that a Legendrian knot  in the complement of
an overtwisted disk can be destabilized.
More precisely, we need a local statement: assuming that the knot is
close to the overtwisted disk, we would like to find a destabilization
by modifying the knot \emph{only} in the neighborhood of the
overtwisted disk.
For this local modification, we restrict to the case of a ``simple''
overtwisted disk, that is, an embedded disk whose characteristic
foliation is isomorphic to the one shown in Figure~\ref{fig: ot-fol}:
the boundary of the disk is the closed leaf of the foliation, and
there is a unique (elliptic) singularity inside the disk.
(Recall that an overtwisted disk is any embedded disk with
Legendrian boundary with $\tb=0$. Standard arguments resting on 
Giroux Flexibility imply that a manifold containing an overtwisted disk also 
contains a simple one.)
Although it is a known fact that any Legendrian knot in the complement
of an overtwisted disk can be destabilized (cf.\ \cite{Dymara}), we
were unable to find the above "local" version in the literature, and
therefore (for the sake of completeness) we include the arguments
below.

\begin{figure}[ht]
  \includegraphics[scale=1.3, keepaspectratio]{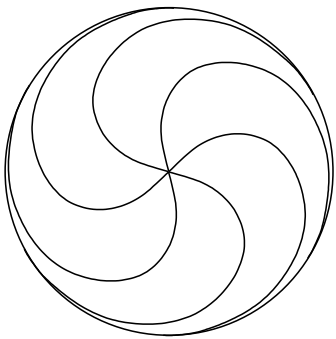}
  \caption{A simple overtwisted disk.}
  \label{fig: ot-fol}
\end{figure} 

Before proceeding any further, we recall the definition of the
stabilization for Legendrian knots in $\RR^3$.
(For a general discussion on Legendrian knots, see,
e.g. \cite{EtnyreKnots}.)
Suppose that in a Darboux chart containing a strand of the Legendrian
knot~$L$, the front projection of this strand has a cusp.
To stabilize $L$, remove this cusp in the projection and replace it by
a kink, as shown in Figure~\ref{fig: stab-strand}.
It is not hard to check that, up to Legendrian isotopy, the stabilized
knot~$L_\Stab$ is independent of the choice of the Darboux coordinates
and of the point where the stabilization is performed.
A more invariant definition can be given by using convex surfaces:
$L_\Stab$ is a stabilization of $L$ if $L \cup L_\Stab$ is the
boundary of a convex annulus whose dividing set consists of a single
arc with endpoints on $L_\Stab$.

\begin{figure}[ht]
  \includegraphics[scale=1.1, keepaspectratio]{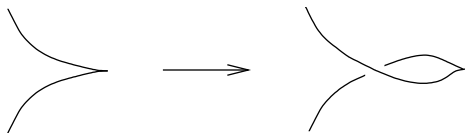}
  \caption{Stabilization of a Legendrian knot in dimension~$3$.}
  \label{fig: stab-strand}
\end{figure}

\begin{theorem}\label{theorem: ot-disk destabilizes}
  Let $(M^3, \xi)$ be an overtwisted contact manifold.
  Suppose that $L$ is a Legendrian knot in the complement of a
  simple overtwisted disk $\D_{ot}$.
  Then $L$ can be destabilized, that is, it is the stabilization of
  another Legendrian knot.
  A destabilization of $L$ is given by the Legendrian connected sum $L
  \connsum \d \D_{ot}$ of $L$ and the boundary of the overtwisted
  disk.
\end{theorem}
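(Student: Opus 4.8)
The plan is to realize the connected sum $L \connsum \d\D_{ot}$ explicitly inside a neighborhood of the overtwisted disk, and then exhibit a convex annulus cobounded by $L$ and $L\connsum\d\D_{ot}$ whose dividing set is a single arc — which by the invariant characterization of stabilization recalled above means $L$ is the stabilization of $L\connsum\d\D_{ot}$. First I would set up a standard model: fix a simple overtwisted disk $\D_{ot}$ with characteristic foliation as in Figure~\ref{fig: ot-fol}, and take a neighborhood $N \cong \D_{ot}\times(-\epsilon,\epsilon)$ with an explicit contact form (e.g. the one induced from the rotationally symmetric overtwisted model $\cos r\, dz + r\sin r\, d\theta$ on a thickened disk). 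Since $L$ is disjoint from $\D_{ot}$ but we want a local modification, I would first use a Legendrian isotopy to push a small strand of $L$ close to $\d\D_{ot}$, lying in $N$ just outside the disk, parallel to the closed leaf; because $\tb(\d\D_{ot})=0$ there is room to do this so that the strand runs alongside $\d\D_{ot}$ as a Legendrian push-off.

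The core step is the local surgery. Near the closed leaf $\d\D_{ot}$, both $\d\D_{ot}$ and the nearby strand of $L$ are Legendrian curves that are $C^0$-close and "parallel." I would take the Legendrian connected sum of the strand of $L$ with $\d\D_{ot}$ in the standard local model for a neighborhood of two parallel Legendrian arcs — this is a purely local operation in a Darboux ball, producing a new Legendrian $L' = L \connsum \d\D_{ot}$ that agrees with $L$ outside $N$. The claim to verify is that $L$ is a stabilization of $L'$. For this I would produce the convex annulus $A$ with $\d A = L \cup L'$: take $A$ to be (a smoothing of) the obvious annulus that sweeps from the strand of $L$ across a collar of $\d\D_{ot}$ and back, pushed slightly off the disk. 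Using Giroux flexibility / convex surface theory in the model, I would arrange $A$ to be convex and compute its dividing set. The elliptic singularity inside $\D_{ot}$ (equivalently the fact that $\tb(\d\D_{ot})=0$ rather than negative) is exactly what forces the dividing set of $A$ to be a single boundary-parallel arc with both endpoints on $L'$ — if $\d\D_{ot}$ had negative Thurston–Bennequin invariant one would instead get extra closed dividing curves. Then the invariant criterion for stabilization recalled before the theorem statement applies directly and gives $L = (L')_{\Stab}$.

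Two technical points need care. First, the connected-sum operation on Legendrians must be shown to be well-defined up to Legendrian isotopy and to have the expected effect on the front projection (it introduces precisely one cusp-to-kink modification, i.e. one stabilization); this is the content that makes the "$L \connsum \d\D_{ot}$" notation meaningful, and I would pin it down in the standard front-projection picture (Figure~\ref{fig: stab-strand}). Second, and this is the main obstacle, I must compute the characteristic/dividing configuration on the annulus $A$ rigorously: the annulus passes near the singular foliation of $\D_{ot}$, so a naive convexity argument may fail, and I expect to need a careful perturbation (isotope $A$ rel boundary to a convex surface using the Giroux criterion, controlling the characteristic foliation near the elliptic point) followed by an Euler-characteristic / relative-$\tb$ bookkeeping to pin the dividing set down to exactly one arc. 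Once that computation is in hand, the destabilization statement follows formally, and the "locality" claim is automatic because every modification was carried out inside the fixed neighborhood $N$ of $\D_{ot}$.
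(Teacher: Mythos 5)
Your outline follows the same broad strategy as the paper (realize $L\connsum\d\D_{ot}$ locally, cobound a convex annulus with $L$, and invoke the convex-surface characterization of stabilization), but it has a genuine gap at exactly the step you flag as ``the main obstacle,'' and the tools you propose there are not sufficient to close it. The relative-$\tb$/Euler-characteristic bookkeeping only counts the intersection points of the dividing set $\Gamma$ with the two boundary components; it does not determine the \emph{configuration} of $\Gamma$ -- whether the arcs are boundary-parallel or traverse the annulus, and whether there are additional closed components. Normally one rules out the bad configurations with Giroux's criterion, but that requires a \emph{tight} neighborhood of the surface, and any neighborhood of the full overtwisted disk $\D_{ot}$ is overtwisted. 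So for the annulus you build in one step around all of $\D_{ot}$, homotopically trivial closed dividing curves and the wrong arc configurations simply cannot be excluded by these arguments, and the ``single boundary-parallel arc'' claim is left unsupported. The paper's way around this is the key idea missing from your proposal: isotope $\D_{ot}$ (rel boundary, by Giroux flexibility) to the foliation of Figure~\ref{fig: ot-fol2}, split it into two half-disks $D_l$, $D_r$, each of which is a \emph{bypass} with $\tb(\d D_{l})=\tb(\d D_r)=-1$, and attach them one at a time. A bypass, unlike a full overtwisted disk, embeds in a tight contact manifold, so the intermediate surface $A\connsum D_l$ has (abstractly) a tight neighborhood; Giroux's criterion then kills closed contractible components, and an auxiliary convex disk bounded by the small unknot eliminates the boundary-parallel configuration, pinning $\Gamma$ down to two parallel traversing arcs. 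Only after that does attaching the second half produce the single boundary-parallel arc.

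Two smaller points. First, your application of the stabilization criterion is reversed: by the convex-annulus definition recalled before the theorem, $L=(L')_{\Stab}$ requires the single dividing arc to have its endpoints on $L$ (the stabilized knot), whereas you assert the arc has endpoints on $L'=L\connsum\d\D_{ot}$ and then conclude $L=(L')_{\Stab}$; as stated these are incompatible, and the paper indeed produces the arc with endpoints on the knot being destabilized. Second, making the annulus convex rel its piecewise-smooth Legendrian boundary is itself not automatic; the paper has to check that $\tb$ of each boundary component of the intermediate surface is negative ($\tb(L\connsum\d D_l)=\tb(L)+\tb(\d D_l)-1=-1$) and to adapt \cite[Proposition 3.1]{Ho} to the piecewise-smooth setting -- your $\d\D_{ot}$ has $\tb=0$, which is exactly the borderline case where the standard convex-perturbation lemma for surfaces with Legendrian boundary does not apply directly, another reason the halving is needed.
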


\begin{cor}
  If $(M^3, \xi)$ is an overtwisted contact manifold, then any
  Legendrian knot in the complement of any overtwisted disk can be
  destabilized. \qed
\end{cor}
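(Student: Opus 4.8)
The plan is to translate the desired destabilization into convex--surface language. Using the characterization recalled just above the statement, it suffices to produce a convex annulus $A$ with $\d A = L \sqcup (L \connsum \d\D_{ot})$ whose dividing set is a single arc, boundary--parallel to the $L$--component (equivalently, $L$ is a stabilization of $L\connsum\d\D_{ot}$, i.e.\ the latter is a destabilization of $L$). I would build $A$ out of two pieces. First, choose the arc used to form the Legendrian connected sum so that it runs from a strand of $L$ into a standard neighborhood $N$ of $\D_{ot}$ and ends on $\d\D_{ot}$; when $L$ is already close to $\D_{ot}$ this arc can be taken short, so that the whole modification is supported in $N$ --- this yields the ``local'' version needed later. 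Outside $N$ the two knots agree, and there $A$ is a thin band parallel to $L$, convex with empty dividing set after a $C^\infty$--small perturbation. Inside $N$ the remaining piece $A_0$ is obtained from $\D_{ot}$ by cutting a slit from a boundary point in to a point near the elliptic singularity and attaching the connecting band along this slit. Gluing $A_0$ to the thin band along the two Legendrian arcs where they meet (matching characteristic foliations by Giroux flexibility) produces the candidate $A$.

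The heart of the argument is the computation of $\Gamma_{A_0}$ near the overtwisted disk. In a model neighborhood of a simple overtwisted disk one puts $\d\D_{ot}$ and the connecting band into normal form and computes the dividing set directly --- in the front projection, or via an explicit characteristic--foliation calculation. The relevant points are: (i) a convex surface isotopic to $\D_{ot}$ carries a single \emph{closed} dividing curve encircling the elliptic singularity --- this is exactly the overtwistedness, via the Giroux criterion, and $\tb(\d\D_{ot})=0$ is consistent with the boundary being disjoint from this curve; (ii) the slit, running from $\d\D_{ot}$ to the elliptic point, cuts the closed curve open into a single arc; and (iii) with the slit (hence the band) placed on the $L$--side, that arc is boundary--parallel to $L$ in $A$, while the thin band contributes nothing. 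Glued together, $A$ then has dividing set a single $L$--parallel arc, which is precisely the statement $L = (L\connsum\d\D_{ot})_{\Stab}$. A rotation--number bookkeeping ($r(\d\D_{ot}) = \pm 1$, since the unique interior singularity is a source or a sink) records whether the destabilization is the positive or the negative one.

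The main obstacle is entirely the local model of the previous paragraph: one must check that sliding $L$ over $\D_{ot}$ turns the closed dividing curve forced by the overtwisted disk into a single boundary--parallel arc, with no stray closed components and no arc on the $L\connsum\d\D_{ot}$--side. This is where the \emph{simplicity} of $\D_{ot}$ (one elliptic singularity, closed boundary leaf) is used essentially; the closed boundary leaf is also what lets the connecting band be isotoped into normal form within the complement of $\D_{ot}$. By comparison, passing from the local statement to the theorem as stated (allowing the connecting arc to be long), and deducing the Corollary (replacing an arbitrary overtwisted disk by a simple one disjoint from $L$, using Giroux flexibility inside a neighborhood of the disk that avoids $L$), should be routine.
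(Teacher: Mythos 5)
Your deduction of the Corollary from the destabilization statement is exactly the paper's: the knot complement is overtwisted, Giroux flexibility produces a simple overtwisted disk there, and the Theorem applies. That part is fine. The problem is that you then undertake to re-prove the Theorem itself, and the sketch has a genuine gap at precisely the step you call ``the main obstacle''. You assert that after slitting $\D_{ot}$ and attaching the band, the convex annulus $A$ has dividing set equal to the closed dividing curve of $\D_{ot}$ cut open into a single arc boundary-parallel to $L$. But dividing sets cannot be computed by this kind of cut-and-paste: once the glued surface is perturbed to be convex, all that is controlled a priori is $\abs{\Gamma\cap \d A}$ via $\tb$ of the boundary components, which still allows boundary-parallel arcs on the wrong component and, more seriously, closed components parallel to the core of the annulus (which would ruin the destabilization criterion). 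Nor can you exclude such closed components by invoking Giroux's criterion on a neighborhood of $A$: every neighborhood of $A$ contains $\D_{ot}$ and is therefore overtwisted, so tightness is simply unavailable for the full configuration.

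The paper's proof exists precisely to get around this. It attaches the overtwisted disk in two stages: first a half-disk $D_l$, which is a bypass, so that $A\connsum D_l$ (together with a capping disk for the small unknot) embeds in a \emph{tight} neighborhood of a bypass; there Giroux's criterion plus the capping-disk trick forces the dividing set of $A\connsum D_l$ to be two arcs running between the boundary components, and only then is the second half attached, after which the final dividing set is determined. Your proposed alternative --- ``put everything into normal form in a model neighborhood and compute the characteristic foliation directly'' --- is not carried out, and carrying it out is the entire content of the theorem; Giroux flexibility only lets you prescribe a foliation adapted to a dividing set you already know, not determine the dividing set of the perturbed glued surface. So either supply that explicit model computation in full, or adopt an indirect tightness argument of the kind the paper uses. (Two smaller inaccuracies: $L$ and $L\connsum\d\D_{ot}$ do not cobound an embedded annulus --- you need a parallel copy of $L$, with the framing convention chosen so that the $\tb$ bookkeeping comes out right; and the endpoints of your cut-open dividing arc lie on the slit, i.e.\ in the interior of $A$ after the band is attached, so even the naive picture does not place the arc with endpoints on $L$ without further argument.)
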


The above corollary follows immediately from Theorem~\ref{theorem:
  ot-disk destabilizes}:
as we already remarked, any overtwisted contact 
structure on the knot complement contains a simple
overtwisted disk.
(Alternatively, the corollary can be derived from results of \cite{EF} or
\cite{Et}.)
It is useful, however, to have an explicit destabilization procedure
given by Theorem~\ref{theorem: ot-disk destabilizes}.
Our proof of Theorem~\ref{theorem: ot-disk destabilizes}, or at least
its main idea, is essentially borrowed from
\cite[Proposition~3.22]{Vog}, although the statement of \cite{Vog} is
different from ours.
(We are also indebted to John Etnyre, who pointed out Vogel's
lemma to us.)
We modify the proof from \cite{Vog} to adapt it to our present
purposes.

\begin{proof}[Proof of Theorem~\ref{theorem: ot-disk destabilizes}] 
  We will use convex surface theory \cite{Gi, Ho} to see the
  destabilization.
  First, notice that the overtwisted disk from Figure~\ref{fig:
    ot-fol} is convex, and its dividing set consists of a unique
  closed curve.
  By Giroux's Flexibility principle, we can isotop the disk (while
  keeping its Legendrian boundary fixed) to obtain a convex disk with
  any characteristic foliation divided by the same curve.
  Consider the foliation shown in Figure~\ref{fig: ot-fol2}.
  It is clear that this foliation can be carried by a convex surface
  and is divided by the same closed curve.
  Thus, after an isotopy, we can assume that the foliation on
  $\D_{ot}$ looks like Figure~\ref{fig: ot-fol2} and has the following
  features:
  \begin{itemize}
  \item [(i)] a family of straight Legendrian arcs (vertical lines on
    Figure~\ref{fig: ot-fol2}) separate $\D_{ot}$ into two
    half-disks;
  \item [(ii)] there are smooth Legendrian closed curves that run
    close to the boundary of these half-disks and are formed by the
    union of the leaves of the foliation.
  \end{itemize}
  Now, consider a (piecewise smooth) Legendrian curve formed by the
  left semicircle of $\d D_{ot}$ and the leftmost vertical Legendrian
  arc.
  Let $D_l$ denote the half-disk bounded by this curve in $\D_{ot}$.
  A disk~$D_r$ is defined similarly on the right side of $\D_{ot}$.

  \begin{figure}[ht]
    \labellist
    \small\hair 2pt
    % \pinlabel $D_l$ at 17 164
    % \pinlabel $D_r$ at 156 164
    % \pinlabel $+$ at 62 20
    % \pinlabel $+$ at  28 20
    % \pinlabel $-$ at 32 44
    % \pinlabel $\D_{ot}$ at 96 64
    \endlabellist
    \centering	
    \includegraphics[scale=1.1, keepaspectratio]{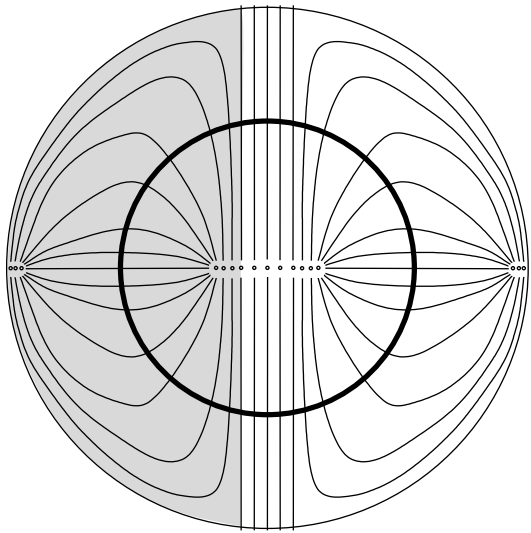}
    \caption{The overtwisted disk~$\D_{ot}$ can be endowed with
      another characteristic foliation divided by the same dividing
      set (the thick circle).
      The foliation in the figure has a collection of singular
      points at the top and bottom of the boundary circle and on
      the horizontal diameter (near the endpoints and the
      center).
      A family of vertical leaves separates the right and the left
      half-disks.
      The vertical leaves go from singular points at the top
      (resp.\ bottom) of the boundary circle to singular points
      near the center.
      The right and left halves of the disk are foliated by arcs
      connecting singularities near the endpoints of the
      horizontal diameter and those near the center of
      $\D_{ot}$.
      The arcs located near $\d \D_{ot}$ have vertical tangencies,
      thus the union of an arc on the top and the corresponding
      arc on the bottom (together with the singular points they
      connect) is a smooth Legendrian knot.
      The half-disk~$D_l$ on the left side of $\D_{ot}$ is cut off by
      the leftmost vertical Legendrian arc.}
    \label{fig: ot-fol2}
  \end{figure}

  Recall that the Thurston-Bennequin number of a Legendrian knot~$K$
  can be computed \cite{Ho} from the dividing set
  $\Gamma_{\Sigma}$ on its convex Seifert surface~$\Sigma$:
  \begin{equation}\label{eq: tb-rot-G}
    \tb(K) = -\frac{1}{2}\, \abs{\Gamma \cap K} \;.
    % , \qquad rot(K)=\chi(\Sigma_+)- \chi(\Sigma_-).
  \end{equation}
  From this formula, $\tb (\d D_l)= \tb(\d D_r)=-1$.
  Note that the Thurston-Bennequin number is defined even for
  piecewise smooth curves: indeed, linking with the transverse
  push-off is still well-defined.
  The formula~\eqref{eq: tb-rot-G} also holds in this context, which
  can be seen, for example, via approximation by smooth Legendrian
  curves.
  We will be attaching the overtwisted disk in two steps, first one
  half, that is, $D_l$, then the other, i.e. the Legendrian arcs and
  $D_r$.
  The idea is that half of $\D_{ot}$ is easier to control: indeed,
  half of an overtwisted disk represents a bypass, and bypasses can be
  found in tight contact manifolds \cite{Ho}.
  Now we are ready to prove the theorem.
  Since the statement is local, it suffices to establish the claim for
  a small Legendrian unknot~$L_0$ with $\tb(L_0)=-1$, located near the
  overtwisted disk $\D_{ot}$ (and disjoint from it).
  Let $L$ be another standard Legendrian unknot with $\tb(L)=-1$, such
  that $L_0$ and $L$ together bound a thin annulus~$A$ whose framing
  gives the Seifert framing on both knots, i.e. the contact framings
  of the knots are one less than the framings provided by the annulus.
  This property guarantees that after a small perturbation, we can
  assume $A$ to be convex.
  We will show that $L \connsum \d \D_{ot}$ is a destabilization of
  $L_0$.
  To begin, notice that the dividing set on $A$ consists of two
  parallel arcs, each connecting $L_0$ and $L$.
  To establish the theorem, we will show that the annulus~$A \connsum
  \D_{ot}$ bounded by $L_0$ and $L \connsum \d \D_{ot}$ can be
  perturbed into a convex surface with dividing set given by one
  boundary parallel arc with endpoints on $L_0$ (see Figure~\ref{fig:
    destab-ot}).
  This will be done in two steps.

  \begin{figure}[ht]
    \labellist
    \small\hair 2pt
    \pinlabel $L_0$ at 47 39
    \pinlabel $L$ at 22 111
    \pinlabel $\d \D_{ot}$ at 179 90
    \pinlabel $\Gamma$ at 91 87
    \endlabellist
    \centering
    \includegraphics[scale=1.0, keepaspectratio]{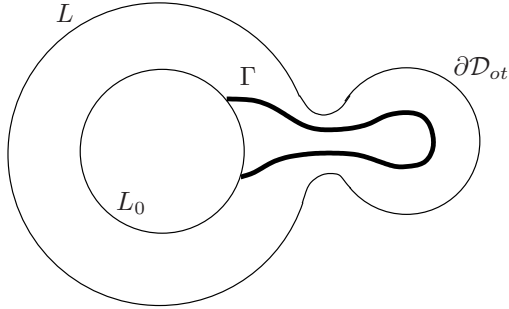}
    \caption{Connected sum with the boundary of an overtwisted disk
      destabilizes a Legendrian knot.}
    \label{fig: destab-ot}
  \end{figure}

  First, assume that the part of $\d \D_{ot}$ where the connected sum
  $L \connsum \d \D_{ot}$ was formed lies in $\d D_l$, and consider
  the Legendrian connected sum $L \connsum \d D_{l}$.
  Make the annulus $A \connsum D_{l}$ convex by a small isotopy fixing
  the boundary (we will keep the same notation for the perturbed
  surface).
  This is possible because $\tb$ of each boundary component is
  negative: $\tb (L_0) = -1$ and $\tb(L\connsum \d D_{l})=\tb(L) +
  \tb(\d D_{l})-1 =-1$. Here, we are using \cite[Proposition 3.1]{Ho}, where the convex perturbation 
  is described as a two-stage process. First, a $C^0$-small isotopy is done near the Legendrian boundary of  
the surface to put a collar neighborhood of the boundary into standard form. Second, a $C^\infty$-small
 isotopy, supported away from the boundary, makes the surface convex. The first perturbation uses a model 
 neighborhood of the Legendrian boundary, and introduces a certain number of singularities with uniform 
 rotation of the contact structure between them. If a collar neighborhood for part of the boundary is already in
 standard form, this part can be kept fixed during the first isotopy. (A close examination of Honda's proof shows that the same arguments go through in our case even though the  
boundary is only piecewise smooth. The key observation here is that the smooth Legendrian knots approximating 
the boundary of $D_l$ forms an annulus which is convex and in standard form in the terminology of \cite[Proposition 3.1]{Ho}.)   
  Therefore, we can assume that the isotopy that makes $A \connsum D_{l}$ convex 
fixes a neighborhood of the diameter of $D_{ot}$ separating $D_l$ and $D_r$, more 
precisely, that all the straight Legendrian arcs between the two half-disks are kept 
fixed, as well as the half-disk $D_r$.  
  The convex surface $A \connsum D_{l}$ (or rather, a surface
  with the same characteristic foliation) can be found in a tight
  contact manifold.
  Indeed, the dividing set on $D_l$ is the same as that on a bypass.
  Bypasses exist in tight contact manifolds, thus by Giroux
  Flexibility we can find a surface with foliation isomorphic to that
  on $D_l$.
  Furthermore, in a tight neighborhood of a bypass we can find both an
  annulus bounded by two small unknots and the strip needed to form
  the connected sum.
  Now, consider the dividing set $\Gamma$ on $A \connsum D_{l}$.
  Tightness of a neighborhood of $A \connsum D_{l}$ implies that
  $\Gamma$ can have no homotopically trivial closed components.
  By \eqref{eq: tb-rot-G}, $\Gamma$ intersects both components of the
  boundary of $A \connsum D_{l}$ at two points.
  This gives the following possibilities for the dividing set: either
  (i) $\Gamma$ consists of two arcs, each connects a point on $L_0$ to
  a point on $L\connsum \d D_{l}$, or (ii) $\Gamma$ consists of two
  boundary-parallel arcs, plus possibly a number of closed curves
  running along the core of the annulus.
  To rule out the second possibility, we argue as follows.
  Since $L_0$ is a small unknot, we can find a disk~$U$ bounded by
  $L_0$, such that $U$ is contained in the tight neighborhood of the
  bypass~$D_l$.
  We can assume that $U$ is convex; then its dividing set is given by
  a single arc connecting two points on $L_0$.
  Moreover, we can assume that $U \cup A \connsum D_{l}$ is a convex
  surface (with a tight neighborhood).
  But then, if there is a boundary-parallel dividing curve on $A
  \connsum D_{l}$ connecting two points of $L_0$, the surface $U \cup
  A \connsum D_{l}$ would have a homotopically trivial closed
  component of the dividing set, which contradicts the tightness of
  the neighbourhood.
  Once we know that on $A \connsum D_{l}$ the dividing set~$\Gamma$
  consists of two parallel arcs running from $L_0$ to $L\connsum \d
  D_{l}$, it is clear that after we attach the other half of the
  overtwisted disk, the convex surface $A \connsum \D_{ot}$ will have
  the dividing set as in Figure~\ref{fig: destab-ot}.
  In conclusion, the knot $L\connsum \D_{ot}$ is a destabilization of
  $L$.
\end{proof}

\section{Higher dimensions: definition of the plastikstufe}
\label{sec:plastikstufe}

Many obstructions of fillability of higher dimensional contact
manifolds have been found in the recent past \cite{Nie, MNW}.  
These obstructions are
all modeled on the overtwisted disk and  take the shape of particular submanifolds in the
given contact manifold.
The initial incarnation of this type of obstruction is the
plastikstufe \cite{Nie}:

\begin{definition}\label{def: plastikstufe}
  Let $(M,\xi)$ be a contact manifold of dimension~$2n+1$, and let $B$
  be a closed $(n-1)$-manifold.
  A \defin{plastikstufe~$\PP_B$ with core~$B$} is a submanifold
  \begin{equation*}
    \PP _B=   D^2 \times B \hookrightarrow M
  \end{equation*}
  such that $\xi \cap T\PP_B$ is a singular foliation that is tangent
  to the fibers $\{z\}\times B$ for every $z \in D^2$, and that
  restricts on every slice $D^2\times \{b\}$ to the foliation of the
  overtwisted disk sketched in Figure~\ref{fig: ot-fol}.
  A plastikstufe~$\PP_B$ is called a \defin{small plastikstufe} if
  there is an embedded open ball in $(M,\xi)$ containing $\PP_B$.
  A contact manifold that contains a plastikstufe is called
  \defin{$PS$-overtwisted}.
\end{definition}

\begin{figure}[ht]
  \includegraphics[width=5cm, keepaspectratio]{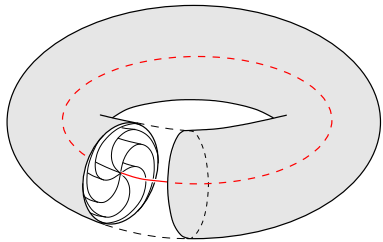}
  \caption{The core of a plastikstufe in a contact $5$-manifold is
    always a circle.}
  \label{fig: plastikstufe-dim5}
\end{figure}

A plastikstufe determines the germ of the contact
structure on its neighborhood \cite{Nie-habil}.
More precisely, let $\D_{ot}$ be an overtwisted disk in
$\bigl(\RR_{ot}^3, \alpha_{ot}\bigr)$ with $\alpha_{ot} = \cos r\, dz
+ r\sin r \, d\varphi$ written in cylindrical coordinates.
Then a plastikstufe~$\PP_B$ has a neighborhood~$U_{PS}$
contactomorphic to a neighborhood of $\D_{ot} \times
\{\text{0-section}\}$ in
\begin{equation*}
  \bigl(\RR_{ot}^3 \times T^*B,
  \alpha_\PS= \alpha_{ot} + \lcan\bigr)
\end{equation*}
with $\lcan = - {\boldsymbol p}\, d {\boldsymbol q}$ the canonical
$1$-form on $T^*B$.
(For purposes of the present paper, we could even accept the existence of
a standard neighborhood of the plastikstufe as part of definition of a
$PS$-overtwisted manifold.)
In the next section, we will prove that Legendrians become flexible in
the presence of a plastikstufe, at least if the latter satisfies some
technical conditions.

\section{$h$-principles, Legendrians, and plastikstufe}
\label{sec:hprinciples}
In this section, we review the definition of loose Legendrians and
prove Theorem~\ref{theorem: p-loose}.
We begin with some background on Gromov's $h$-principle for Legendrian
immersions \cite{PDR, EM}.

\subsection{Formal homotopy of Legendrian immersions and Gromov's
  $h$-principle}

Observe that a Legendrian immersion $f\colon S \to (M, \xi)$ induces a
monomorphism $df\colon TS \to TM$ such that $d_x f(T_x S)$ is a
Lagrangian plane in the symplectic space $\bigl(\xi_{f(x)}, d
\alpha\bigr)$ for every $x\in S$.
If we choose a compatible almost complex structure $J$ on $(M, \xi)$,
then $d_x f (T_x S)$ is a totally real subspace in $\bigl(\xi_{f(x)},
J\bigr)$.
Thus, we can think of $df\colon TS \to \xi$ as Lagrangian
monomorphism, or as a totally real monomorphism if an almost complex
structure is chosen.
(Note that in the latter case, the complexification $df^{\CC}\colon T
_{\CC} S \to \xi_{f(x)}$ with $T_{\CC}S = TS\otimes \CC$ gives a
complex isomorphism in every fiber.)
Suppose that $g,h\colon S \to M$ are two Legendrian immersions.
If there is a regular homotopy of Legendrian immersions~$f_t$ that
connects $g$ and $h$, then the maps $dg \colon T S \to \xi$ and $dh
\colon T S \to \xi$ are homotopic through $F_t = df_t$.
Conversely, assume that
\begin{itemize}
\item [(a)] there is a homotopy of continuous maps $f_t\colon S \to
  M$ connecting $g$ and $h$,
\item [(b)] and a homotopy $F_t$ of Lagrangian monomorphisms over
  $f_t$ connecting $dg, dh\colon T S \to \xi$.
\end{itemize}
In this case, we say that the Legendrian immersions $g$ and $h$ are
\defin{formally homotopic}.
Gromov's $h$-principle for Legendrian immersions implies
that $g$ and $h$ are homotopic through Legendrian immersions whenever
they are formally homotopic.
%

%\marginpar{\tiny{\red{Klaus: Maybe we could write down the $h$-principle? But maybe not %needed? }}}

%
We also recall the definition of formal isotopy of isotropic
\emph{embeddings}.
We say that isotropic embeddings $g, h\colon S \to M$ are
\defin{formally isotopic} if
\begin{itemize}
\item [(a')] there is a smooth isotopy  $f_t\colon S \to
  M$ of embeddings connecting $g$ and $h$;
\item [(b')] there is a homotopy $F_t$ of isotropic monomorphisms
  over $f_t$ connecting $dg, dh\colon T S \to \xi$;
\item [(c')] the path of isotropic monomorphisms $F_t$ is homotopic
  to $df_t$ through paths of monomorphisms with fixed endpoints $dg$,
  $dh$.
\end{itemize}
By a result of Gromov \cite{PDR}, formally isotopic {\em subcritical} isotropic embeddings are isotropically isotopic. The same  holds for 
{\em open} Legendrian embeddings.   In general,  this $h$-principle fails for {\em closed} Legendrian embeddings, but
%\red{Klaus: the $1$-parametric $h$-principle?} 
it holds for the special
class of \emph{loose} Legendrians \cite{Mur}, which we review in the next
subsection.
%
%\begin{remark}
%Note that the definition of the formal homotopy of Legendrian immersions (or a similar definition f%or isotropic immersions)  has fewer requirements
  %than the definition of the formal isotopy of isotropic embeddings. 
%
  %This is because by the Hirsch--Smale $h$-principle, conditions~(a) and (b)
  %imply that the immersions $g$ and $h$ can be connected by a regular
  %homotopy $f_t$ of smooth immersions such that the path $df_t$ is
  %homotopic (rel endpoints) to the path of Lagrangian monomorphisms
  %connecting $dg$ and $dh$.
%\marginpar{\tiny{\red{Klaus: I would not mind removing the whole remark\dots }}}
%
%  In the case of embeddings, there is no a priori reason for that $g$
  %and $h$ to be smoothly isotopic through embeddings.
%\end{remark} 

\begin{remark}\label{rmk: almost_complex_structures formal_homotopy}
  The notion of formal homotopy carries over verbatim to the case
  where $\xi$ is an almost contact structure.
  Moreover, consider a path of almost contact structures~$\xi_t$
  that starts and ends at honest contact structures~$\xi_0$ and
  $\xi_1$, respectively.
  Suppose that $f, g\colon S \to M$ are embeddings that are Legendrian
  for both $\xi_0$ and $\xi_1$, and the bundle maps~$dg$ and $dh$ are
  Lagrangian for every almost contact structure~$\xi_t$.
  Then $f$ and $g$ are formally homotopic with respect to $\xi_0$ if
  and only if they are formally homotopic with respect to $\xi_1$.
  In this case, Gromov's $h$-principle implies that $f$ and $g$ are
  homotopic through immersions that are Legendrian with respect to
  $\xi_0$ if and only if they are homotopic through Legendrian
  immersions with respect to $\xi_1$.
\end{remark}

\begin{remark}\label{rmk: rot-class}
  It is useful to reinterpret condition~(b) in the definition of
  formal homotopy in different terms for the case where the image of
  Legendrian immersions $g, h\colon S \to M$ is contained in some open
  ball $U \subset M$.
  Fix a complex structure~$J$ on $\xi$ that is tamed by the conformal
  symplectic structure given by $\restricted{d\alpha}{\xi}$, and
  choose  a $J$-complex trivialization of $(\xi, J)$ over the ball~$U$;
  both $J$ and the trivialization will be unique up to homotopy.
  In particular, these choices allow us to identify
  $\bigl(\restricted{\xi}{U}, J\bigr)$ with the trivial
  bundle~$\CC^n\times U\to U$.
  If $g\colon S \to (U, \xi)$ is a Legendrian immersion in $U$, we can
  write the differential~$dg$ with respect to the trivialization
  chosen above, as a map~$dg\colon TS \to \CC^n$, and furthermore
  since $dg (T_x S)$ is a totally real subspace in $\bigl(\xi_{g(x)},
  J\bigr)$, the complexification
  \begin{equation*}
    dg^\CC\colon T_{\CC}S \to \CC^n
  \end{equation*}
  gives a complex isomorphism in every fiber.
  Given now a second Legendrian immersion $h\colon S \to U$, we can
  relate the two maps $dg^\CC$ and $dh^\CC$ by a map $\varphi\colon S
  \to \GL(n,\CC)$ such that
  \begin{equation}\label{eq: phi}
    dh^\CC_x = \varphi(x)\cdot dg^\CC_x
  \end{equation}
  for every $x\in S$.
  The homotopy class of $\varphi$ is called the \defin{relative
    rotation class} of the Legendrian immersion $h$ relative to $g$.
  The relative rotation class is an element of $[S, \GL(n, \CC)] \cong
  [S, \U(n)]$.
  It is clear that the maps $dg^{\CC}\colon T S \otimes \CC \to \xi$
  and $dh^{\CC}\colon T S \otimes \CC \to \xi$ are homotopic through
  fiberwise complex isomorphisms if and only if their relative
  rotation class vanishes.
\end{remark}

%%%%%%%%%%%%%%%%%%%%%%%%%%%%%%%%%%%%%%%%%%%%%%%%%%%%%%%%%%%%%%%%%%%%%%%%%%%%%%%%%%%%%

\subsection{Loose Legendrians}

We define loose Legendrian embeddings \cite{Mur} (or loose Legendrians
for short) by requiring that they possess the following model chart.

\begin{definition}\label{def: loose}
  Let $n > 1$.
  In $\bigl(\RR^3, \xi_\Std = \ker(dz-y\, dx)\bigr)$, let $L_0$
  be the Legendrian curve
  \begin{equation*}
    t \mapsto \Bigl(t^2, \frac{15}{4}\, (t^3 - t),
    \frac{3}{2}t^5 - \frac{5}{2}t^3\Bigr) \;,
  \end{equation*}
  where $t$ is in an open interval containing $[-\frac{\sqrt 5}{\sqrt
    3}, \frac{\sqrt 5}{\sqrt 3}]$.
  Let $U \subseteq \RR^3_\Std$ be some convex open set containing
  $L _0$ (see Figure~\ref{fig: stab-strand}).
  In $T^*\RR^{n-1}$ let $Z$ be the Lagrangian zero section
  $\{\mathbf{p} = 0\}$, and let $V_\rho$ be the open set $\bigl\{
  \abs{\mathbf{p}} < \rho, \abs{\mathbf{q}} < \rho \bigr\}$.
  Then $U \times V_\rho$ is canonically identified with an open set in
  $(\RR^{2n+1}, \xi_\Std)$, and $L_0 \times Z$ is  Legendrian
  in this contact Darboux chart.
  If $\rho > 1$, we call the relative pair $(U \times V_\rho, L_0
  \times Z)$ a \defin{loose chart}.
A connected Legendrian manifold $\Lambda \subseteq (M, \xi)$ with
$\dim \Lambda > 1$ is called \defin{loose} if there is a Darboux chart
$U \subseteq M$ so that $(U, \Lambda \cap U)$ is a loose chart.
%
%A disconnected Legendrian manifold $\Lambda = \Lambda_1 \cup \dotsm
%\cup \Lambda_k \subseteq Y$ is called \defin{loose} if there is a
%disjoint collection of Darboux charts $W_1, \dotsc, W_k \subseteq Y$
%so that $(W_i, \Lambda_i \cap W_i)$ is a loose chart for all $i$, and
%$W_i \cap \Lambda_j = \varnothing$ for $i \neq j$.
%
\end{definition}

Note that the above definition contains a size restriction $\rho>1$ on
the chart parameter~$\rho$.
This is a key condition: indeed, any Legendrian can be shown to have a
model chart with arbitrarily small~$\rho$.
It is not a priori clear why a chart with a small $\rho<1$ is not
isomorphic to a chart with large $\rho>1$, but this must be true
because loose Legendrians satisfy the following $h$-principle:

\begin{theorem}[\cite{Mur}]
  If two loose Legendrians~$\Lambda _0,  \Lambda _1$ in a contact manifold of
  dimension $2n+1>3$ are formally isotopic, then they are isotopic
  through Legendrian embeddings. \qed
\end{theorem}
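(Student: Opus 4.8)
The plan is to reduce the theorem to a \emph{de-wrinkling} problem and then use the flexibility concentrated in a loose chart to resolve the wrinkles one at a time.

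\textbf{Step 1: realize the formal isotopy by wrinkled Legendrians.} Gromov's $h$-principle for Legendrian immersions \cite{PDR} already turns the formal isotopy $(f_t, F_t)$ into a regular homotopy through Legendrian \emph{immersions}; the obstruction to upgrading it to an embedded isotopy is precisely the double points that are born and die along the homotopy. Rather than chase those, I would invoke a Legendrian version of Eliashberg--Mishachev's $h$-principle for wrinkled embeddings: the formal isotopy is $C^0$-approximated by an isotopy $\Lambda_t$ through \emph{wrinkled Legendrian embeddings} --- embeddings that are honestly Legendrian outside a finite collection of small balls, in each of which the front carries a standard \emph{wrinkle}. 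Since the endpoints $\Lambda_0,\Lambda_1$ are genuine Legendrians, one arranges $\Lambda_t$ to have no wrinkles at $t=0,1$, to have only boundedly many (in disjoint balls) at every time, and each wrinkle to be born out of, and to die into, an embryonic (degenerate) wrinkle.

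\textbf{Step 2: resolve one wrinkle using a loose chart.} This is the heart of the argument. Fix a loose chart $(U\times V_\rho,\;L_0\times Z)$ with $\rho>1$ in $\Lambda_0$ and transport it along $\Lambda_t$ (possible since the wrinkles stay away from it). At a time slice carrying a single wrinkle in a ball $B$, join $B$ to the loose chart by an embedded arc in $\Lambda_t$ that avoids the other wrinkle balls --- this is possible because $\Lambda_t$ is connected and, as $2n+1>3$ forces $n\ge 2$, removing finitely many small balls does not disconnect it. The claim is that there is an honest Legendrian isotopy, supported in a neighborhood of $B$ together with a thin tube over this arc, that removes the wrinkle and restores an admissible loose chart. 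The mechanism: a generic one-parameter slicing through the wrinkle exhibits its front as a family of Legendrian arcs in which two cusps are born, open into a \emph{zig-zag} (a stabilized arc modelled on the curve $L_0$ of Figure~\ref{fig: stab-strand}), and then close again; so the only local obstruction to the wrinkle being a genuine Legendrian is one such zig-zag. Because the loose chart is a product $L_0\times Z$ in which $L_0$ \emph{is} exactly this zig-zag, one performs a ``Legendrian finger move'' that slides the wrinkle's zig-zag down the tube and cancels it against the zig-zag of $L_0$. The inequality $\rho>1$ is precisely the room in the $T^*\RR^{n-1}$ directions of $V_\rho$ needed for this cancellation to be carried out through \emph{honest} Legendrian embeddings --- with $\rho<1$ the finger move is obstructed --- which is the whole point of the size condition in Definition~\ref{def: loose}.

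\textbf{Step 3: conclude, and the main obstacle.} Applying Step 2 successively, following each wrinkle through its birth and death as $t$ varies, converts the wrinkled isotopy $\Lambda_t$ into a genuine Legendrian isotopy from $\Lambda_0$ to $\Lambda_1$. I expect the entire difficulty to be in Step 2: one must fix the precise Cerf-theoretic normal form of a Legendrian wrinkle, identify the embedded zig-zag inside it, and write the cancelling family down explicitly inside the loose chart, checking at every stage that the submanifold remains embedded and Legendrian rather than merely formally so. This is where the specific numerology of the model curve $L_0$ and the threshold $\rho=1$ are used essentially; everything else is soft $h$-principle bookkeeping.
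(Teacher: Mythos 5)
This theorem is quoted from \cite{Mur} with a \qed{} and is not proved in the present paper, so there is no in-paper argument to measure you against; I am comparing your sketch with the actual proof in \cite{Mur}. At the level of architecture you have reconstructed that proof correctly: one first realizes the formal isotopy by an isotopy of \emph{wrinkled} Legendrian embeddings (a wrinkling-type $h$-principle in the spirit of Eliashberg--Mishachev, with births and deaths through embryos), then resolves the wrinkles into stabilizations (zig-zags inserted over markings joined to the loose chart), and finally disposes of the stabilizations using the chart. Your Steps 1 and 3 are essentially right, modulo the bookkeeping you acknowledge.

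The genuine gap is in Step 2, and the mechanism you propose there is not the right one. A zig-zag cannot be ``cancelled against the zig-zag of $L_0$'' by a finger move: if both zig-zags disappeared, the chart would no longer be loose, and if the chart's zig-zag survives, then nothing has been cancelled --- the operation you actually need is \emph{absorption}: the loose chart with one extra stabilization inserted is Legendrian isotopic, rel boundary, to the original loose chart. That absorption lemma is the quantitative heart of the whole theorem, and it is exactly the statement that cannot be soft (the holomorphic-curve rigidity mentioned after Definition~\ref{def: loose} shows it fails for small charts). Its proof in \cite{Mur} compares the ``size'' of the zig-zag of the model curve $L_0$ --- the oscillation of the $z$-coordinate, equivalently the area of the double-point region in the Lagrangian projection of the interpolating family of immersed arcs in $\RR^3$ --- with the width $\rho$ of the factor $V_\rho$, and uses the $T^*\RR^{n-1}$-directions to spread that immersed $3$-dimensional family into an embedded Legendrian isotopy; the inequality $\rho>1$ is precisely what makes this possible. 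Your sketch correctly identifies $\rho>1$ as the essential hypothesis but gives no indication of how it enters, so the one hard step is asserted rather than proved. A secondary omission: the markings over which wrinkles are resolved must be chosen with vanishing Euler characteristic (or balanced against one another), since resolving a wrinkle over a marking $\Sigma$ shifts the formal invariants by a term depending on $\chi(\Sigma)$; without this constraint the resolved isotopy would not stay in the given formal class, and arranging it is what forces the combinatorics of connecting wrinkles to each other and to the chart in \cite{Mur}.
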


In other words, loose Legendrians are flexible, i.e. they are
classified up to Legendrian isotopy by easy to calculate invariants
coming from smooth topology and bundle theory \cite{Mur}.
%\marginpar{\tiny{\red{Klaus: Maybe  not here, but in an email.  What type of easy invariants do
 %you need for this? }}}
%
Recall that for general Legendrian embeddings, a similar $h$-principle
manifestly does not hold in any dimension.
It is known that holomorphic curve invariants detect Legendrian
rigidity in many examples where two knots are formally isotopic but
Legendrian non-isotopic.
(This also tells us that the requirement $\rho>1$ imposes a
non-trivial restriction.)
%

%%%%%%%%%%%%%%%%%%%%%%%%%%%%%%%%%%%%%%%%%%%%%%%%%%%%%%%%%%%%%%%%%%%%%%%%%%%%%%%%%%%%%%
\subsection{The proof of Theorem~\ref{theorem: p-loose}}
Our strategy for proving Theorem~\ref{theorem: p-loose} is quite simple.
The motivation comes from Theorem~\ref{theorem: ot-disk destabilizes}:
in every overtwisted $\RR^3$-slice we can isotop a given Legendrian
embedding of a knot to an embedding where its front projection has a
``kink''.
We can think of a plastikstufe as a ``product'' family of overtwisted
disks and perform a family of the above isotopies for slices $\Lambda \cap
\RR^3\times \{b\}$ of a given Legendrian~$\Lambda $ (near the plastikstufe).
This product isotopy would produce a chart required by
Definition~\ref{def: loose} and verify that the given Legendrian is,
indeed, loose.
For us to carry out this plan, the Legendrian~$\Lambda $, perhaps after an
isotopy, must contain a codimension~$0$ submanifold~$\Lambda _0$
diffeomorphic to $B \times[0,1]$, where $B$ is the core of the
plastikstufe~$\PP_B$.
Moreover, $\Lambda _0$ must be in a ``product position'' in some standard
neighborhood $U_{PS}= N_{\epsilon} \D_{ot} \times T_{\epsilon}^*B $
of $\PP_B$:
\begin{eqnarray*}
  \PP_B = & \D_{ot} \times \{ \text{$0$-section} \} &\subset U_{PS} \;, \\
  \Lambda _0 = & K \times \{ \text{$0$-section} \} &\subset U_{PS} \;,
\end{eqnarray*}
where $K \subset N_{\epsilon} \D_{ot}$ is a Legendrian arc.
In other words, we must be able to move the given Legendrian $\Lambda $ by an
ambient contact isotopy so that a product strip of the form $K \times
\{\text{$0$-section} \}$ would be contained in $\Lambda $.
In Lemma~\ref{lemma: isotopy}, we will show that this is possible if
the plastikstufe has a spherical core and {\em trivial rotation}, a
notion we now define.
%

%\marginpar{\tiny{\red{Klaus: I changed the description/definition of
%      the leaf ribbon, I don't know what you think about these
%      changes?  In the previous version, it was not clear if the leaf
  %    ribbon could not be contractible in the leaf instead of being
    %  parallel to the core.}}}
%
A \defin{leaf ribbon} in a plastikstufe~$\PP_B$ with core~$B$ is
  a thin ribbon diffeomorphic to $B \times (0,1)$, obtained by
  shrinking a leaf of $\PP_B$ within itself into the standard
  neighborhood of the core~$B$.
Notice that all such ribbons are Legendrian isotopic: ribbons
contained in the same leaf are all deformations of the same
  leaf and ribbons in different leaves can be related by rotating around
the binding.
From now on, we restrict to plastikstufes with core~$B$ diffeomorphic
to $S^{n-1}$ (see Remark~\ref{rmk: general-B} for a brief discussion
of the general case).
We will compare the formal Legendrian homotopy class of  their
leaf ribbons to the class represented by a punctured Legendrian disk.

\begin{definition}
  For a small plastikstufe~$\PP$ with spherical core, we define the
  rotation class of $\PP$ to be the relative rotation class between a
  leaf ribbon of $\PP$ and a punctured Legendrian disk.
  We say that $\PP$ has \defin{trivial rotation} if this class
  vanishes.
\end{definition}

It is not hard to show that any two Legendrian disks in $(M, \xi)$ are
Legendrian isotopic.
Therefore, the rotation class of a small plastikstufes with spherical core
is well-defined. 

\begin{remark}\label{rmk:pi=0}
  The rotation class of a plastikstufe with core $B=S^{n-1}$ is an
  element of
  \begin{equation*}
    \bigl[B\times [0,1], \GL(n, \CC)\bigr] \cong [B, \U(n)]=
    \pi_{n-1} (\U(n))= \begin{cases} \ZZ & \text{if $n$ is even} \\ 0
      & \text{if $n$ is odd.} \end{cases}
  \end{equation*}
  Thus, in contact manifolds of dimension $2n+1=4m+3$, every small
  plastikstufe with spherical core has trivial rotation.
  We will see in Subsection~\ref{subsection: triv-rot} that in the appropriate
  sense, the rotation of a plastikstufe in dimensions $2n+1=4m+1>5$ is
  determined by the rotation of its core, while in dimension~$5$ it is
  the rotation of the leaf direction that determines the rotation of
  $\PP$.
\end{remark}

Given a plastikstufe $\PP$ with trivial rotation, and a Legendrian~$\Lambda $,
we now isotop $\Lambda$ towards $\PP$.

\begin{lemma}\label{lemma: isotopy}
  Suppose $(M^{2n+1}, \xi)$ is $PS$-overtwisted and $\Lambda  \subset M$ is a
  given Legendrian.
  Assume that there is a small plastikstufe~$\PP_B \subset (M, \xi)$ with
  spherical core $B=S^{n-1}$, and $\PP$ has trivial rotation.
 Then there exists an ambient contact isotopy of $M$ that takes a
 submanifold $\Lambda_0$  of $\Lambda $ diffeomorphic to $S^{n-1}\times [0,1]$ to a
 product strip $\Lambda _1 = K \times \{ \text{$0$-section} \}$ near $\PP_B$.
\end{lemma}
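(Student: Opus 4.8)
The plan is to produce the submanifold $\Lambda_0$ and bring it into product position using the Legendrian $h$-principle of Gromov for open Legendrian embeddings, with the triviality of the rotation class as the key hypothesis that makes the formal data align. First I would fix a standard neighborhood $U_{PS} = N_\epsilon\D_{ot} \times T^*_\epsilon B$ of the plastikstufe, where $B = S^{n-1}$, and inside it identify the target product strip $\Lambda_1 = K \times \{\text{0-section}\}$, where $K \subset N_\epsilon \D_{ot}$ is a Legendrian arc obtained by shrinking a leaf of the overtwisted disk into the standard neighborhood of the binding. This $\Lambda_1$ is precisely a leaf ribbon of $\PP_B$ (pulled into $U_{PS}$), and as noted just before the lemma, all leaf ribbons are Legendrian isotopic, so the choice of leaf is immaterial.

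Next I would locate the candidate $\Lambda_0 \subset \Lambda$. Since $\Lambda$ is a closed Legendrian of dimension $n > 1$, I can choose a small embedded disk $D^{n-1} \times [0,1] \subset \Lambda$; a neighborhood of such a strip is an open Legendrian submanifold diffeomorphic to an open subset of $S^{n-1}\times [0,1]$ (equivalently a thickened leaf ribbon's worth of Legendrian, after we arrange the smooth topology to be $S^{n-1}\times[0,1]$). More carefully, I want $\Lambda_0 \cong S^{n-1}\times[0,1]$ sitting inside $\Lambda$ — this uses that $B = S^{n-1}$ and that a neighborhood of a point together with a meridional sphere can be arranged inside $\Lambda$; since the target is just an open strip, it suffices to work with an \emph{open} Legendrian embedding $S^{n-1}\times(0,1) \hookrightarrow (M,\xi)$, where Gromov's $h$-principle (quoted in the excerpt for open Legendrian embeddings) applies. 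Then I would set up the comparison: both $\Lambda_0$ (as a leaf-ribbon-type strip) and $\Lambda_1 = K\times\{\text{0-section}\}$ are contained in contractible Darboux balls, so by Remark~\ref{rmk: rot-class} their formal Legendrian homotopy type inside a ball is detected by the relative rotation class in $[S^{n-1}\times[0,1], \GL(n,\CC)] \cong [S^{n-1}, \U(n)] = \pi_{n-1}(\U(n))$.

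The heart of the argument is to check that this relative rotation class vanishes, so that $\Lambda_0$ and $\Lambda_1$ are formally Legendrian homotopic (and in fact formally isotopic, conditions (a')–(c'), after noting the underlying smooth isotopy is unobstructed since both are embedded strips in a ball). By definition, $\Lambda_1$ — being a leaf ribbon — has rotation class equal to that of the plastikstufe $\PP_B$ relative to a punctured Legendrian disk, which is zero by the trivial-rotation hypothesis. On the other hand, any small Legendrian strip $\Lambda_0$ inside a ball can be compared to a punctured Legendrian disk as well: shrinking the strip and capping, its formal class relative to a standard Legendrian disk is forced, and any two Legendrian disks are Legendrian isotopic. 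Chaining these identifications (leaf ribbon $\leftrightarrow$ punctured disk $\leftrightarrow$ the strip in $\Lambda$) via the relative rotation classes in $\pi_{n-1}(\U(n))$, and using that the rotation class of $\PP$ vanishes, the relative class of $\Lambda_0$ with respect to $\Lambda_1$ is trivial. This is the step I expect to be the main obstacle: being careful that the leaf-ribbon and the strip-in-$\Lambda$ genuinely have the \emph{same} reference disk, and that the identification $[S^{n-1}\times[0,1],\GL(n,\CC)] \cong [S^{n-1},\U(n)]$ is compatible with all the maps in play, since the rotation class only measures the sphere direction.

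Finally, with formal isotopy established, Gromov's $h$-principle for open Legendrian embeddings (cited in the excerpt right after conditions (a')–(c')) upgrades the formal isotopy to a genuine Legendrian isotopy from $\Lambda_0$ to $\Lambda_1 = K\times\{\text{0-section}\}$ in $U_{PS}$. To promote this to an \emph{ambient} contact isotopy of $M$ I would invoke the Legendrian isotopy extension theorem (the standard consequence of the Weinstein neighborhood theorem for Legendrians), which extends the isotopy of $\Lambda_0$ to a compactly supported contactomorphism isotopy of $M$; a cutoff near the (compact) image handles the fact that $\Lambda_0$ is relatively compact in $\Lambda$. This produces the desired ambient contact isotopy taking $\Lambda_0\subset\Lambda$ onto the product strip near $\PP_B$, completing the proof of Lemma~\ref{lemma: isotopy}.
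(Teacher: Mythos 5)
Your overall strategy (compare rotation classes, build a formal isotopy, invoke an $h$-principle, then apply isotopy extension) is the paper's strategy, and your use of the trivial-rotation hypothesis to kill the relative rotation class between a leaf ribbon and a strip inside a small Legendrian disk of $\Lambda$ matches the paper. But there is a genuine gap at the step where you apply Gromov's $h$-principle for \emph{open Legendrian embeddings} directly to the $n$-dimensional strips. That $h$-principle takes a \emph{formal isotopy} as input, and condition (a') requires an underlying smooth isotopy of embeddings. You dismiss this with ``the underlying smooth isotopy is unobstructed since both are embedded strips in a ball,'' but for an $n$-dimensional submanifold of a $(2n+1)$-ball this is not a general-position statement: the self-intersection locus of a generic one-parameter family of immersions of an $n$-manifold into a $(2n+1)$-manifold has expected dimension $0$, so a generic path between two embeddings passes through finitely many maps with double points (the case $n=1$, knots in $\RR^3$, shows the phenomenon is real). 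You would need an actual unknotting argument in codimension $n+1$, which you do not supply. You also never verify condition (c') (that your path of Lagrangian monomorphisms is homotopic rel endpoints to the differential of the smooth isotopy), which is part of the data the embedding $h$-principle requires.

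The paper's proof is organized precisely to avoid this difficulty: it does \emph{not} apply the $h$-principle to the strips, but restricts to the $(n-1)$-dimensional core spheres $f_i(S^{n-1}\times\{c\})$ and uses the $h$-principle for \emph{subcritical} isotropic embeddings. After using the trivial-rotation hypothesis to build a Lagrangian monomorphism $G$ over the cylinder $S^{n-1}\times[0,1]$, Smale--Hirsch produces an immersion $g$ of this $n$-dimensional cylinder into $M^{2n+1}$, and here general position \emph{does} give an embedding for free (expected double-point dimension $2n-(2n+1)=-1$); the slices of this embedded cylinder are then automatically an honest smooth isotopy of embedded spheres, which together with $G_t$ gives the formal subcritical isotropic isotopy, including the analogue of (c'). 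The strip direction is then recovered afterwards by upgrading to \emph{framed} isotropic embeddings (a vector field $X$ with $TS\oplus\Span(X)$ Lagrangian), where the case $n=2$ needs a separate argument using the path of Lagrangian monomorphisms, and the Legendrian isotopy of the annuli is rebuilt as thin linear strips over the isotoped core spheres before applying the ambient isotopy theorem. Your route could likely be repaired by inserting exactly this reduction to the core spheres (or an independent unknotting argument for the strips), but as written the key existence of the smooth isotopy of embeddings is asserted rather than proved.
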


\begin{proof}
  We can assume that  the plastikstufe~$\PP$ and at least some
  part of $\Lambda $ are contained in an open ball~$U$.
  Fix a strip $\Lambda_1 = S^{n-1} \times I$ which is isotopic to a leaf
  ribbon for $\PP$ but lies in the complement of $\PP$. (Here and
  below, $I$ denotes a closed interval.)
  Choose a small Legendrian disk in $\Lambda \cap U$, and let $\Lambda_0 =
  S^{n-1}\times I$ be an annular Legendrian strip in this disk, so that the interior 
  of $\Lambda_0$ is isotopic to the punctured disk.

Now we have two Legendrian embeddings $f_0, f_1\colon S^{n-1}\times I \to M$, such that $\Lambda_i$ 
is the image of $f_i$, $i=0,1$. We would like to connect these by a path of Legendrian 
embeddings. We use Gromov's $h$-principle for subcritical isotropic and {\em open} Legendrian embeddings \cite{PDR}, which in our situation works as follows.

We restrict attention to the core spheres $S_i= f_i \left(S^{n-1} \times \{ c \}\right)$, $i=0,1$, where $c$ is a point inside $I$. (In what follows, we will be assuming that $c$ is close to an endpoint of 
$I$. It will be clear from the context where we want the core spheres to be; a particular choice is unimportant, since any two are isotropically isotopic.)
Let  $\restricted{f_i}{\text{core}}\colon S^{n-1} \to M$ denote the isotropic 
embeddings given by the  corresponding 
restriction maps. We first construct a formal isotropic isotopy 
between these embeddings.

Reparameterizing $I$, we can think of $f_0$ and $f_1$ as maps
$$
f_0\colon S^{n-1} \times [0, \delta] \to M, \qquad 
f_1\colon S^{n-1} \times [1-\delta, 1] \to M
$$ 
for some small $\delta>0$.
Choosing a trivialization of $\xi$ in $U$
   as in Remark~\ref{rmk: rot-class},
   we consider the maps 
   $$
   df_0^\CC \colon T_{\CC}(S^{n-1}\times [0, \delta]) \to \CC ^n, \qquad    df_1^\CC \colon T_{\CC}(S^{n-1}\times [1-\delta, 1]) \to \CC ^n. 
$$   
   The map $df_0^\CC$ is homotopic to the map 
  $G^\CC_0\colon T(S^{n-1}\times [0, \delta]) \to \CC ^n$ defined by
  $G^\CC_0(x, t)= df_0^\CC(x, \delta)$, $0 \leq t \leq \delta$. Similarly, $df_1^\CC$ is homotopic to the map 
  $G^\CC_1\colon T(S^{n-1}\times [1-\delta, 1]) \to \CC ^n$ defined by
  $G^\CC_1(x, t)= df_1^\CC(x, 1-\delta)$, $1- \delta \leq t \leq 1$. Then, we can write 
  \begin{equation} \label{eq: psi}
    df_0^\CC(x, \delta)= \psi(x)   df_1^\CC(x, 1-\delta)
  \end{equation}
  for a map $\psi\colon S^{n-1} \to \GL(n,\CC)$, cf. Remark~\ref{rmk:
    rot-class}.
  Because $\PP$ has trivial rotation and $\Lambda _1$ is isotopic to a leaf
  ribbon of $\PP$, it follows that $\Lambda_0$ and $\Lambda _1$ are formally
  Legendrian homotopic.
  Then, the map $\psi$ is homotopic to the map sending each point of
  $S^{n-1}$ to the identity transformation in $\GL(n,\CC)$. Using 
  this homotopy, we can construct a map  
\begin{equation*}
    G^\CC \colon T_{\CC} (S^{n-1}\times [0,1]) \to \CC ^n 
  \end{equation*}
which gives a complex isomorphism in every fiber and coincides with $df_0^\CC$ near $S^{n-1} \times \{0\}$ and with 
$df_1^\CC$ near $S^{n-1} \times \{1\}$. The real part $G$ of $G^\CC$ is a Lagrangian
monomorphism $T (S^{n-1}\times [0,1]) \to \xi$, restricting to $df_0$
and $df_1$ near the ends of the cylinder $S^{n-1}\times [0,1]$.

 The Smale-Hirsch immersion theorem
(see e.g.  \cite{EM}) implies that $G$ is homotopic to the differential 
$dg$ of an immersion $g\colon {S^{n-1}\times [0,1]} \to M$ such that $g=f_0$ near $S^{n-1}\times \{0\}$
and $g=f_1$ near $S^{n-1}\times \{1\}$.

But $S^{n-1}\times [0,1]$ has dimension $n$, and $M$ has dimension $2n+1$, so we can perturb 
$g$ into general position by homotoping it (through immersions)
to an embedding $\tilde{g}$. We can assume that this homotopy fixes the cylinder near the ends, and that the core spheres we took are close enough to the ends of the cylinder.  

Clearly, the maps $\tilde{g}_t= \tilde{g}(\cdot, t)$ are smooth embeddings 
connecting $\restricted{f_0}{\text{core}}$ and $\restricted{f_1}{\text{core}}$. The maps 
$G_t = \restricted{G(\cdot, t)}{T(S^{n-1}\times \{t\})}$ are isotropic monomorphisms 
covering $\tilde{g}_t$; moreover,  $G_t$ coincides with 
$\restricted{df_0}{TS^{n-1}}$ resp. $\restricted{df_1}{TS^{n-1}}$  near 0 resp. 1, 
and the path $G_t$ is homotopic to $d\tilde{g}$ rel endpoints. 
In short, we have  a formal isotopy of subcritical isotropic
embeddings. By Gromov's $h$-principle, it follows that 
$\restricted{f_0}{\text{core}}$ and $\restricted{f_1}{\text{core}}$ can be connected 
by a path $\tilde{f}_t$ of isotropic embeddings.

The next step is to upgrade  $\tilde{f}_t$ to a path of {\em framed} isotropic 
embeddings. For an $(n-1)$-dimensional isotropic submanifold $S$ of $(M^{2n+1}, \xi)$, we consider a framing by a 
non-vanishing vector field $X\colon S \to \xi$, such that  $X(s) \in (T_s S)^\perp \setminus T_s S$ for every 
point $s \in S$.  (Here, $(T_s S)^\perp$ stands for the symplectic orthogonal complement of $T_s S$ in 
$(\xi, d\alpha)$; this subspace contains $T_s S$ and has dimension $n+1$.) In other words, we look at framings such that 
at every point of $S$ the direct sum $TS \oplus \Span (X)$ is a Lagrangian subspace of $\xi$.   
For an isotropic submanifold contained in a ball, 
we can use a trivialization to find two vector fields $v, v'$ such that at every point  $(T_s S)^\perp = T_s S \oplus \Span (v, v')$, and think of the vector field $X$ as living in $\Span(v, v')$.
Since $X$ does not vanish, after normalizing we can think of the framing as a map $X\colon S \to   S^1$. 

Notice that the original embeddings $f_0$, $f_1$
of $S^{n-1}\times I$ endow the restrictions 
$\restricted{f_i}{\text{core}}$, $i=0,1$  with a framing $X_i$ given by 
$X_i=df_i(\text{the $I$-direction})$.  We would like to find framings $X_t$ for the isotropic embeddings $\tilde{f}_t$, so that 
the path $(\tilde{f}_t, X_t)$ connects $(\restricted{f_0}{\text{core}}, X_0)$ and $(\restricted{f_1}{\text{core}}, X_1)$. 
If $n>2$, then the maps $X_0\colon f_0(S^{n-1}) \to S^1$ and $X_1\colon f_1(S^{n-1}) \to S^1$ are necessarily homotopic, 
so we can find the desired path $X_t$. 
In the case $n=2$, recall that the differentials $df_0$ and $df_1$ of the embeddings of $S^{n-1}\times I$ were connected by a path of Lagrangian monomorphisms $F_t$. 
Then, $F_t(\text{the $I$-direction})$ is a homotopy between $X_0$ and $X_1$.  

The framed isotropic isotopy of the core spheres $f_0(S^{n-1})$ and $f_1(S^{n-1})$ immediately gives a Legendrian isotopy of the original annuli $\Lambda_0$ and $\Lambda_1$. 
Indeed, $\Lambda_0$ and $\Lambda_1$ can be shrinked and flattened (by a Legendrian isotopy) to thin annuli around their core spheres,  linear in the direction of $X_0$ resp. $X_1$. 
These can be connected by a family of thin Legendrian annuli built on the spheres $\tilde{f}_t(S^{n-1})$, spanning a linear strip in the $X_t$ direction.  By the ambient isotopy theorem, 
(see e.g. \cite[Theorem~2.6.2]{Ge}), the resulting isotopy of Legendrian annuli can be realized by 
 a compactly supported contact isotopy of $(M, \xi)$, concluding the proof.
\end{proof}

%%%%%%%%%%%%%%%%%%%%%%%%%%%%%%%%%%%%%%%%%%%%%%%%%%%%%%%

\begin{lemma}
  Suppose that $\Lambda $ is a Legendrian in $(M, \xi)$ which lies in the
  complement of a small plastikstufe with spherical binding and
    trivial rotation.
  Then there is an open subset $V$ of $M$ with topology $S^{n-1}
  \times D^{n +2}$ such that the pair $(V, V \cap \Lambda )$ is
  contactomorphic to $(D^3_\Std \times D^*S^{n-1}, \Lambda _0)$, where
  $\Lambda _0$ is the cartesian product of a stabilization in $D^3_\Std$
  and the zero section in $D^*S^{n-1}$.
\end{lemma}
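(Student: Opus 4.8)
The strategy is to combine Lemma~\ref{lemma: isotopy} with the standard neighborhood theorem for the plastikstufe and with the explicit ``sliding over the overtwisted disk'' construction from Section~\ref{sec:otdisk}, carried out in a family over the core sphere. By Lemma~\ref{lemma: isotopy}, after an ambient contact isotopy we may assume that $\Lambda$ contains a product strip $\Lambda_1 = K \times \{\text{$0$-section}\}$ sitting inside a standard neighborhood $U_{PS} = N_\epsilon\D_{ot} \times T^*_\epsilon B$ of the plastikstufe, with $K \subset N_\epsilon\D_{ot}$ a Legendrian arc and $B = S^{n-1}$. The arc $K$ is a small Legendrian arc near the overtwisted disk $\D_{ot}$, so it is (a strand of) a Legendrian knot to which Theorem~\ref{theorem: ot-disk destabilizes} applies: connected-summing $K$ with $\d\D_{ot}$ inside $\RR^3_{ot}$ produces a new arc $\widetilde K$ whose front projection acquires a kink, i.e. $\widetilde K$ is a stabilization of a simpler arc. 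The point is that this three-dimensional modification of $K$ takes place entirely in a neighborhood of $\D_{ot}$ inside the $\RR^3_{ot}$ factor, hence can be performed $B$-fiberwise without disturbing the $T^*B$-direction.

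\textbf{Key steps, in order.} First, invoke Lemma~\ref{lemma: isotopy} to put $\Lambda$ in product position near $\PP_B$, so that $\Lambda \cap U_{PS} \supset K \times \{\text{$0$-section}\}$. Second, recall from Section~\ref{sec:plastikstufe} that $U_{PS}$ is contactomorphic to a neighborhood of $\D_{ot} \times \{\text{$0$-section}\}$ in $(\RR^3_{ot} \times T^*B, \alpha_{ot} + \lcan)$; in particular the ``overtwisted'' slices $\RR^3_{ot} \times \{b\}$ give an honest product family of overtwisted $\RR^3$'s. Third, apply Theorem~\ref{theorem: ot-disk destabilizes} in the slice $\RR^3_{ot} \times \{b_0\}$ over a basepoint $b_0 \in B$: the knot obtained by closing up $K$ in that slice can be destabilized via connected sum with $\d\D_{ot}$, and the destabilization is supported in a neighborhood of $\D_{ot}$. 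Fourth, observe that this local modification of $K$ — replacing a cusp in the front projection by a kink — can be carried out simultaneously in every slice $\RR^3_{ot} \times \{b\}$, because the plastikstufe is genuinely a product $\D_{ot} \times B$ with the contact form split as $\alpha_{ot} + \lcan$, so the contact isotopy producing the kink in the $\RR^3_{ot}$ factor, crossed with the identity on $T^*B$, is a contact isotopy of $U_{PS}$. After this family of isotopies, $\Lambda$ contains a product strip $K' \times \{\text{$0$-section}\}$ where $K' \subset D^3_\Std$ is now a stabilized Legendrian arc: precisely the arc $L_0$ of Definition~\ref{def: loose}, up to Legendrian isotopy in $D^3_\Std$. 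Fifth, by the uniqueness of the stabilized arc up to Legendrian isotopy (Figure~\ref{fig: stab-strand}) and the size-stretching trick — rescaling the $T^*B$-directions to enlarge $\rho$ past $1$, which is harmless since $B$ is compact and we are inside a ball — we obtain a Darboux chart $V$ with topology $S^{n-1} \times D^{n+2}$ such that $(V, V \cap \Lambda)$ is contactomorphic to $(D^3_\Std \times D^*S^{n-1}, L_0 \times Z)$, i.e. a loose chart.

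\textbf{Main obstacle.} The routine parts are the three-dimensional destabilization (borrowed wholesale from Theorem~\ref{theorem: ot-disk destabilizes}) and the rescaling of the cotangent directions to achieve $\rho > 1$. The delicate point is Step four: one must check that the destabilization procedure of Theorem~\ref{theorem: ot-disk destabilizes}, which is phrased via convex surface theory in a \emph{single} $\RR^3$, genuinely globalizes over the family parameter $b \in B$ without introducing $b$-dependent monodromy or breaking the Legendrian condition in the mixed directions. This is where the hypothesis ``trivial rotation'' and the spherical binding were already spent in Lemma~\ref{lemma: isotopy} to get the product position; here what one uses instead is the rigid product structure of the standard neighborhood $U_{PS} \cong N_\epsilon\D_{ot} \times T^*_\epsilon B$ with the \emph{split} contact form $\alpha_{ot} + \lcan$, which guarantees that a contact isotopy $\phi_t$ of $(N_\epsilon \D_{ot}, \alpha_{ot})$ fixing a neighborhood of $\partial N_\epsilon\D_{ot}$ extends to the contact isotopy $\phi_t \times \id_{T^*_\epsilon B}$ of $U_{PS}$, which is the identity near the boundary and hence patches to a global ambient contact isotopy of $(M,\xi)$ by cutting off. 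Making this cut-off precise, and confirming that after it the arc in each slice is the \emph{same} Legendrian arc $L_0$ (so that the resulting chart is honestly a product and not merely a mapping torus of loose charts), is the crux of the argument.
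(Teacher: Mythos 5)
Your proof follows the paper's own (very terse) argument exactly: apply Lemma~\ref{lemma: isotopy} to put $\Lambda$ into product position, then run the destabilization of Theorem~\ref{theorem: ot-disk destabilizes} simultaneously in every slice $\RR^3_{ot}\times\{b\}$, which is legitimate precisely because the standard neighborhood of the plastikstufe is an honest product with split contact form $\alpha_{ot}+\lcan$, so the whole picture is $S^{n-1}$-equivariant. Your Step five (achieving $\rho>1$ and concluding looseness) is the content of the \emph{next} lemma and is not needed for this statement; note also that there the paper shrinks the stabilization inside the $D^3_\Std$ factor and applies the anisotropic rescaling $(x,y,z)\mapsto(x/\epsilon,y/\epsilon,z/\epsilon^2)$, rather than ``enlarging the cotangent directions,'' which is not possible for a fixed $D^*S^{n-1}$.
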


\begin{proof}
  By Lemma~\ref{lemma: isotopy}, after an  appropriate Legendrian isotopy, in the standard neighborhood of the plastikstufe $\RR^3_{OT}
  \times D^*S^{n-1}$ the Legendrian  $\Lambda $ is given by $K \times S^{n-1}$.
  Our entire picture is $S^{n-1}$--equivariant, so we then apply
  Theorem~\ref{theorem: ot-disk destabilizes} simultaneously for each
  point in $S^{n-1}$.
\end{proof}

\begin{lemma}
  Suppose that $\Lambda  \subset (M, \xi)$ is Legendrian, and there exists an
  open set $ V\subset M$ such that $(V, V \cap \Lambda )$ is
  contactomorphic to $(D^3_\Std \times D^*S^{n-1}, \Lambda _0)$, where
  $\Lambda _0$ is the cartesian product of a stabilization in $D^3_\Std$
  and the zero section in $D^*S^{n-1}$.
  Then $\Lambda $ is loose.
\end{lemma}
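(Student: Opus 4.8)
The plan is to reduce the statement to a purely local computation inside the model chart $(D^3_\Std \times D^*S^{n-1}, \Lambda_0)$ and match it with the loose chart of Definition~\ref{def: loose}. The hypothesis gives a contactomorphism sending $(V, V\cap\Lambda)$ to $(D^3_\Std \times D^*S^{n-1}, \Lambda_0)$ where $\Lambda_0 = K_\Stab \times Z$, with $K_\Stab$ a stabilized (kinked) Legendrian arc in $D^3_\Std$ and $Z$ the zero section of $D^*S^{n-1}$. First I would observe that, after a contact isotopy supported in $D^3_\Std$, the stabilized arc $K_\Stab$ can be brought to contain a copy of the model curve $L_0$ of Definition~\ref{def: loose} inside some convex open set $U \subseteq \RR^3_\Std$: this is exactly the statement that a Legendrian arc with a kink in its front projection is, up to isotopy, the standard stabilization, and the model curve $L_0$ is one explicit presentation of such a kinked arc (compare Figure~\ref{fig: stab-strand}). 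This isotopy extends to an ambient contact isotopy of $D^3_\Std$, hence of $D^3_\Std \times D^*S^{n-1}$, carrying $\Lambda_0$ to $L_0 \times Z'$ where $Z'$ is the zero section of $D^*S^{n-1}$.

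The second step is to upgrade the cotangent factor from $D^*S^{n-1}$ to a piece of $T^*\RR^{n-1}$ of the required size. Since looseness is a local property and only requires finding \emph{one} Darboux chart of the prescribed form, I would restrict attention to a small ball in $S^{n-1}$: a chart $W \subseteq S^{n-1}$ diffeomorphic to $\RR^{n-1}$ induces a Darboux identification of $D^*S^{n-1}$ restricted over $W$ with a neighborhood of the zero section in $T^*\RR^{n-1}$, under which $Z'$ restricted over $W$ becomes $\{\mathbf p = 0\} = Z$. Thus $(U \times (D^*S^{n-1}|_W), (L_0 \times Z')|_{U\times W})$ is contactomorphic to $(U \times V_{\rho_0}, L_0 \times Z)$ for some $\rho_0 > 0$, where $V_{\rho_0} = \{|\mathbf p| < \rho_0, |\mathbf q| < \rho_0\}$.

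The third step removes the size constraint: we need $\rho > 1$, whereas $\rho_0$ may be small. Here I would invoke the rescaling automorphism of the standard contact structure. The chart $U \times V_{\rho_0}$ with $L_0 \times Z$ sits inside a larger model $U' \times V_\infty$, and the contact dilation $(\mathbf{x}, \mathbf{q}, \mathbf{p}) \mapsto (\mathbf{x}, \lambda\mathbf{q}, \lambda^{-1}\mathbf{p})$ on the $T^*\RR^{n-1}$ factor (combined with an overall rescaling in the $\RR^3$ factor that preserves the Legendrian condition and carries $U$ into itself for $\lambda$ small) takes $Z$ to $Z$ and enlarges the $\mathbf q$-extent; composing with a rescaling of the $\mathbf p$-coordinate one arranges $\rho > 1$. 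Concretely, $(U \times V_{\rho_0}, L_0 \times Z)$ and $(U \times V_\rho, L_0 \times Z)$ are contactomorphic for any $\rho, \rho_0 > 0$ via such a dilation, since the Legendrian $L_0 \times Z$ is the zero section in the $T^*$ factor and the dilation fixes it. Chaining the three contactomorphisms exhibits a Darboux chart $U'' \subseteq M$ with $(U'', \Lambda \cap U'') \cong (U \times V_\rho, L_0 \times Z)$ for $\rho > 1$, which is precisely a loose chart; since $\dim \Lambda = n > 1$ and $\Lambda$ is connected (or we pass to the connected component meeting $V$, noting looseness is a property of a connected Legendrian), Definition~\ref{def: loose} is satisfied and $\Lambda$ is loose.

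The main obstacle is the third step: justifying that the size parameter can be enlarged from small $\rho_0$ to $\rho > 1$ by an explicit contactomorphism, while simultaneously keeping the $\RR^3$ factor, the curve $L_0$, and the convex set $U$ under control. One must be careful that the dilation on the cotangent factor does not interact badly with the $\alpha_{ot}$-type form on the $\RR^3$ factor — but since the model neighborhood splits as a product contact form $\ker(dz - y\,dx) + \lcan$ and the dilation acts only on $\lcan$ (a conformal symmetry of $\lcan$ for the Liouville scaling), the product structure is respected and no cross terms arise. The remaining verifications — that the kinked arc $K_\Stab$ genuinely contains the model curve $L_0$ up to isotopy, and that restricting $D^*S^{n-1}$ over a ball gives a standard Darboux chart — are routine once the product splitting is in hand.
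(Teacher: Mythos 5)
Your first two steps are fine and agree with the paper's proof: restrict to a Darboux ball in $S^{n-1}$ to identify a piece of $D^*S^{n-1}$ with $V_{\rho_0}\subset T^*\RR^{n-1}$ for some (possibly small) $\rho_0$, and normalize the stabilized arc in the $D^3_\Std$ factor. The gap is in your third step, and it is fatal as written. The anisotropic dilation $(\q,\p)\mapsto(\lambda\q,\lambda^{-1}\p)$ does preserve $\lcan$ exactly and hence is a contactomorphism of the product, but it sends $V_{\rho_0}$ to $\{\abs{\q}<\lambda\rho_0,\ \abs{\p}<\rho_0/\lambda\}$; to contain a $V_\rho$ with $\rho>1$ you would need both $\lambda\rho_0>1$ and $\rho_0/\lambda>1$, which forces $\rho_0>1$ to begin with. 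The proposed fix, ``composing with a rescaling of the $\p$-coordinate,'' is not available: $\p\mapsto\mu\p$ multiplies $\lcan$ by $\mu$ while leaving $dz-y\,dx$ untouched, so it is not a contactomorphism of $\ker(dz-y\,dx+\lcan)$ unless you conformally rescale the $\RR^3$ factor by the \emph{same} factor $\mu$ — and any such rescaling distorts the fixed model curve $L_0$, which Definition~\ref{def: loose} pins down exactly. A sanity check shows step 3 must fail: as the paper notes right after Definition~\ref{def: loose}, \emph{every} Legendrian admits a chart $(U\times V_{\rho_0}, L_0\times Z)$ with $\rho_0$ small, so if a size-increasing contactomorphism fixing $L_0\times Z$ existed, every Legendrian would be loose.

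The correct order of operations, which is the paper's, spends a different resource: the freedom to \emph{shrink the stabilized arc} before rescaling. By a compactly supported Legendrian isotopy in $D^3_\Std$, replace the stabilization by the $\epsilon$-scaled model curve $\Lambda_\epsilon\colon t\mapsto\bigl(\epsilon t^2,\ \tfrac{15\epsilon}{4}(t^3-t),\ \tfrac{\epsilon^2}{2}(3t^5-5t^3)\bigr)$ with $\epsilon<\rho_0$. Then apply the \emph{isotropic} rescaling $(x_i,y_i,z)\mapsto(x_i/\epsilon,\ y_i/\epsilon,\ z/\epsilon^2)$ of all $2n+1$ standard coordinates at once; this pulls back the standard contact form to $\epsilon^{-2}$ times itself, hence is a genuine contactomorphism, and it simultaneously carries $\Lambda_\epsilon$ to $L_0$ and $V_{\rho_0}$ to $V_{\rho_0/\epsilon}$ with $\rho_0/\epsilon>1$. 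The point is that the hypothesis hands you a cotangent box of \emph{definite} size while the kink in the $3$-dimensional factor can be made arbitrarily small, and it is this ratio — not any symmetry of the cotangent factor alone — that produces $\rho>1$.
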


\begin{proof}
  For any metric on $S^{n-1}$, $D^*S^{n-1}$ contains a subset
  symplectomorphic to $V_\rho = \bigl\{\abs{\q} < \rho,\, \abs{\p} <
  \rho \bigr\} \subset T^*\RR^{n-1}$, for some $\rho$.
  Fix $\epsilon > 0$ so that $\epsilon < \rho$.
  In $D^3_\Std$, we can isotope any stabilization by compactly
  supported isotopy to the stabilization $\Lambda_\epsilon$ given by
  \begin{equation*}
    t \mapsto \left(\epsilon t^2, \frac{15 \epsilon}{4}(t^3 - t),
      \frac{\epsilon^2}{2}(3t^5 - 5t^3)\right) \;.
  \end{equation*}
  This defines  a subset~$V$ of $(M , \xi)$ so that $(V, V \cap
  \Lambda )$ is contactomorphic to $(D^3_\Std \times Q_\rho, \Lambda_\epsilon
  \times Z)$, where $Z$ is the zero section in $Q_\rho$.
  Reparametrizing our coordinates by the contactomorphism $(x_i, y_i,
  z) \mapsto (\frac{x_i} {\epsilon}, \frac{y_i}{\epsilon},
  \frac{z}{\epsilon^2})$ demonstrates that $V$ is a loose chart.
\end{proof}

\subsection{Construction of small plastikstufes with trivial
  rotation.} \label{subsection: triv-rot}

For Theorem~\ref{theorem: p-loose} to be useful, we need a supply of
contact manifolds containing small plastikstufes satisfying the
hypotheses of Lemma~\ref{lemma: isotopy}, i.e. having
spherical core and trivial rotation.
We will use the following result of Etnyre--Pancholi \cite{EP} to find
suitable examples.

\begin{theorem}[\cite{EP}]\label{theorem: Etnyre-Pancholi}
  Let $(M, \xi)$ be a contact manifold of dimension $2n + 1$ and let
  $B$ be an $(n - 1)$-dimensional isotropic submanifold with trivial
  conformal symplectic normal bundle.
  Then we may alter $\xi$ to a contact structure $\xi'$ that contains
  a small plastikstufe with core~$B$ such that $\xi$ and $\xi'$ are
  homotopic through almost contact structures.
  In fact, choosing any neighborhood~$U$ of $B$, we can find a smaller
  neighborhood~$U'$ of $B$ such that both the modification and the
  homotopy only affect $\xi$ in $U\setminus \overline{U'}$. \qed
\end{theorem}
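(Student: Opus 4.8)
The plan is to reduce the statement to the classical $3$-dimensional \emph{full Lutz twist}, exploiting that the triviality of the conformal symplectic normal bundle makes a neighbourhood of $B$ a genuine product. First I would apply the isotropic neighbourhood theorem (see e.g.\ \cite{Ge}): since $B$ is closed and isotropic of dimension $n-1$, its conformal symplectic normal bundle has rank $2$, and, being trivial by hypothesis, a choice of trivialization identifies a neighbourhood of $B$ in $(M,\xi)$ with a neighbourhood of $B\times\{0\}$ in
\begin{equation*}
  \bigl(\RR^3\times T^*B,\ \alpha_0 := \alpha_{\Std} + \lcan\bigr),
\end{equation*}
where $\RR^3$ carries cylindrical coordinates $(r,\varphi,z)$ and the standard contact form $\alpha_{\Std} = dz + \tfrac12 r^2\, d\varphi$, $\lcan$ is the canonical $1$-form on $T^*B$, and $B$ corresponds to $\{r=0\}\times(\text{zero section})$.

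Next I would perform, inside the $\RR^3$ factor, a full Lutz twist supported in an annulus $\{r_1 < r < r_2\}$: replace $\alpha_{\Std}$ by a contact form $\alpha' = h_1(r)\,dz + h_2(r)\,d\varphi$, where $(h_1,h_2)$ equals $(1,\tfrac12 r^2)$ for $r\le r_1$ and $r\ge r_2$ and makes one extra full turn around the origin of the $(h_1,h_2)$-plane in between (the contact condition $h_1 h_2' - h_1' h_2\neq 0$ for $r>0$ arranged in the usual way, and $\alpha'$ moreover a positive multiple of $\cos\rho\,dz + \rho\sin\rho\,d\varphi$ near a smaller disk $\{r\le r_*\}$, $r_*\in(r_1,r_2)$, in a reparametrized radial coordinate $\rho$). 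Setting $\xi' := \ker(\alpha'+\lcan)$ on the model neighbourhood and $\xi':=\xi$ elsewhere gives a well-defined global hyperplane field ($\alpha'$ agrees with $\alpha_{\Std}$ near $r=r_1,r_2$), and it is contact because $\alpha'\wedge d\alpha'$ is a volume form on $\RR^3$ while $(d\lcan)^{n-1}$ is a volume form on $T^*B$. Since $\lcan$ vanishes on the zero section, $\PP_B := \{z=0,\, r\le r_*\}\times(\text{zero section})\cong D^2\times B$ has $\xi'\cap T\PP_B$ tangent to every slice $\{z\}\times B$ and restricting on each slice $\{z=0,\,r\le r_*\}\times\{b\}$ to the radial foliation of Figure~\ref{fig: ot-fol}; that is, $\PP_B$ is a plastikstufe with core $B$, and — after shrinking the model into a Darboux ball, as one can in the intended case of an unknotted isotropic sphere — a small one.

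The heart of the matter is the almost contact statement. Here I would invoke the classical fact that a full Lutz twist does not change the homotopy class of the underlying plane field, the change being supported rel boundary in $\{r_1<r<r_2\}$: this provides a path $\alpha_t$ of nowhere-vanishing $1$-forms on $\RR^3$, supported in that annulus, from $\alpha'$ to $\alpha_{\Std}$. Then $\eta_t := \ker(\alpha_t+\lcan)$ is a path of hyperplane fields from $\xi'$ to $\xi$, and, using a continuous choice of vector $v_0(t)$ with $\alpha_t(v_0(t))=1$ and a fixed $w_0$ along $T^*B$ with $\lcan(w_0)=-1$, it splits continuously as $\eta_t = \ker\alpha_t\oplus W_t$ with $W_t := \ker\lcan\oplus\RR\bigl(v_0(t),w_0\bigr)$ projecting isomorphically onto $T(T^*B)$. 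Equipping $\ker\alpha_t$ with a complex structure compatible with the ambient orientation of $\RR^3$ and the coorientation by $\alpha_t$, and $W_t$ with the pullback of a fixed $d\lcan$-compatible complex structure, one obtains a continuous family $\mathcal J_t$; at the contact endpoints $t=0,1$, taking $v_0$ to be the Reeb field of $\alpha_t$ makes this splitting $d(\alpha_t+\lcan)$-orthogonal and $\mathcal J_t$ compatible with $d(\alpha_t+\lcan)$, hence homotopic to the contact complex structure (the compatible ones forming a contractible space). So $(\eta_t,\mathcal J_t)$ is the required homotopy of almost contact structures; since it and the modification are supported in $\{r_1<r<r_2\}\times(\text{thin neighbourhood of the zero section})$, they can be fitted inside any prescribed $U\setminus\overline{U'}$ by choosing $r_1,r_2$ and the cotangent neighbourhood small enough.

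I expect this last step to be the real obstacle: all the rest is a careful but essentially routine assembly of the isotropic neighbourhood theorem and the $3$-dimensional Lutz construction, whereas the content there is exactly that a \emph{full} (rather than a half) twist, together with the triviality of the conformal symplectic normal bundle that turns the neighbourhood into a product, is what lets the nontrivial loop in the $(h_1,h_2)$-plane be traded for a homotopy of complex structures on the whole $2n$-dimensional contact hyperplane.
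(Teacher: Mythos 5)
First, note that the paper does not prove this statement at all: it is quoted from Etnyre--Pancholi \cite{EP} (hence the \verb|\qed| after the statement), and the relevant features of their construction are only reviewed later, in the proof of Proposition~\ref{prop: PSwithZeroTwist}. So your proposal should be judged as a from-scratch reconstruction of the \cite{EP} argument, and as such it has a genuine gap that occurs \emph{before} the step you flag as the real obstacle.

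The gap is in the support of the modification. After identifying a neighbourhood of $B$ with a neighbourhood of $\{0\}\times B$ in $\bigl(\RR^3\times T^*B,\ dz+\tfrac12 r^2\,d\varphi+\lcan\bigr)$, you replace $dz+\tfrac12 r^2\,d\varphi$ by $h_1(r)\,dz+h_2(r)\,d\varphi$ with the twist supported in $\{r_1<r<r_2\}$. But this region is the $z$-\emph{independent} cylinder shell $\{r_1<r<r_2\}\times\RR_z$, which is unbounded in the $z$-direction, whereas the isotropic neighbourhood theorem only gives you the model on a \emph{bounded} neighbourhood of the origin in the $\RR^3$ factor (times a cotangent neighbourhood of the zero section). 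On the part of the boundary of that neighbourhood where $r_1<r<r_2$ --- the ``top and bottom caps'' of the twist cylinder --- the modified form does not agree with the original one, so $\xi'$ cannot be extended by $\xi$ to the rest of $M$: the construction does not produce a contact structure on $M$, and a fortiori the modification is not confined to $U\setminus\overline{U'}$. You are, in effect, attempting a Lutz twist along the transverse \emph{arc} given by the $z$-axis, and a radially symmetric twist along a transverse arc cannot be cut off rel its endpoints. This is exactly why Etnyre--Pancholi first choose a \emph{closed} transverse curve $\gamma$ inside the $\RR^3$-ball and perform the generalized Lutz twist in a neighbourhood of $B\times\gamma$, identified with $\bigl(T^*B\times S^1\times D^2,\ \ker(p\,dq+dt+r^2\,d\theta)\bigr)$; that region is compactly contained in $U$ and the twist, being supported in $S^1\times\{r_1<r<r_2\}$ with $S^1$ closed, glues back to $\xi$. (This setup is spelled out in the proof of Proposition~\ref{prop: PSwithZeroTwist}.) Your remaining ingredients --- contactness of $\alpha'+\lcan$ via $\alpha'\wedge d\alpha'\wedge(d\lcan)^{n-1}$, the identification of the plastikstufe with core $B$ in the slice $\{z=\text{const}\}$, and the homotopy of hyperplane fields using $\pi_1(S^2)=0$ together with contractibility of the space of compatible complex structures --- are essentially the right ones, but they must be redone in the $T^*B\times S^1\times D^2$ model, which is where the actual technical work of \cite{EP} lies.
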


In our application we need a refinement of this result, 
in which one also determines the rotation of the plastikstufe.

\begin{prop}\label{prop: PSwithZeroTwist}
  The modification of Theorem~\ref{theorem: Etnyre-Pancholi} can be
  applied in any contact manifold to produce a small plastikstufe with
  spherical core and trivial rotation.
\end{prop}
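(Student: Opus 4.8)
The plan is to run the Etnyre--Pancholi construction on a carefully chosen isotropic sphere $B\cong S^{n-1}$ and then check that the leaf ribbon of the resulting plastikstufe is formally Legendrian homotopic to a punctured disk. The point is that Theorem~\ref{theorem: Etnyre-Pancholi} is a purely local modification: it replaces $\xi$ by $\xi'$ inside a small shell $U\setminus\overline{U'}$ around $B$, and the modification itself has a standard model. So the rotation class of the output plastikstufe depends only on (a) the formal data of $B$ as an isotropic submanifold, i.e. the homotopy class of the isotropic monomorphism $TB\to\xi$ together with a trivialization of its conformal symplectic normal bundle, and (b) the fixed local model inside the Etnyre--Pancholi construction. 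The first step, therefore, is to unwind exactly how the leaf ribbon sits inside the standard neighborhood $\bigl(\RR^3_{ot}\times T^*B,\ \alpha_{ot}+\lcan\bigr)$ produced by the construction, and to express its differential, in a complex trivialization of $\xi$ over the ambient ball, in terms of $dB^{\CC}\colon T_{\CC}B\to\CC^{n-1}$ and the ribbon (leaf) direction in the $\RR^3_{ot}$ factor.

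Second, I would reduce to a statement about the core: since the leaf ribbon is diffeomorphic to $B\times(0,1)$ and we have freedom to shrink it into the core neighborhood, its relative rotation class relative to the punctured disk lives in $[B,\U(n)]=\pi_{n-1}(\U(n))$, which by Remark~\ref{rmk:pi=0} is zero when $n$ is odd and $\ZZ$ when $n$ is even. So there is nothing to prove in dimensions $4m+3$, and the whole content is in dimensions $2n+1=4m+1$. In that case the generator of $\pi_{n-1}(\U(n))\cong\ZZ$ is detected by the clutching/degree invariant, and one must show it vanishes for our leaf ribbon. Here I would use the freedom, already present in Theorem~\ref{theorem: Etnyre-Pancholi}, to \emph{choose $B$}: any isotropic $S^{n-1}$ with trivial conformal symplectic normal bundle will do, and in a Darboux ball one can take $B$ to be the standard round isotropic sphere whose isotropic monomorphism $TB\to\xi$ is homotopically trivial in the appropriate sense. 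Then the formal data contributing to the rotation class is only the leaf-direction contribution from the $\RR^3_{ot}$ factor, which is a fixed, $B$-independent loop in $\U(n)$; a direct inspection of the standard overtwisted-disk model shows that this loop is contractible (as it already is in the $n=1$ case, where $\pi_0(\U(1))$ is trivial anyway, and the model is literally the product of that picture with $B$).

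The cleanest way to organize the last point is to separate the two directions: in dimension $5$ ($n=2$) it is the rotation \emph{in the leaf direction} that could contribute, while in dimensions $4m+1>5$ it is the rotation \emph{of the core} $B$; in both cases we arrange the relevant piece of formal data to be trivial by our choice of $B$ inside a Darboux ball (for the core) and by the standardness of the overtwisted model (for the leaf direction). Concretely: take $B$ to be a small standard isotropic sphere in a Darboux chart, apply Theorem~\ref{theorem: Etnyre-Pancholi} there; the resulting $\PP_B$ is automatically small, has spherical core, and in the complex trivialization of $\xi$ over the Darboux ball its leaf ribbon's complexified differential is homotopic to that of the (equally standard) punctured Legendrian disk spanning a slightly perturbed round $S^{n-1}$, because the only discrepancy is the model leaf loop, which is nullhomotopic in $\U(n)$.

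The main obstacle I anticipate is exactly the bookkeeping in the second and third steps: writing down the complexified differential $df^{\CC}$ of the leaf ribbon explicitly enough, inside the Etnyre--Pancholi normal-form neighborhood, to see that its class in $\pi_{n-1}(\U(n))$ splits as ``core contribution $\oplus$ leaf contribution'' and that each summand can be killed by the available choices. This requires knowing the Etnyre--Pancholi model in enough detail to track how the overtwisted disk is glued in, and then a not-entirely-formal computation of a clutching invariant. Everything else --- smallness, sphericality of the core, well-definedness of the rotation class --- is immediate from the statements already recalled (Theorem~\ref{theorem: Etnyre-Pancholi}, Remark~\ref{rmk:pi=0}, and the fact that any two Legendrian disks are isotopic).
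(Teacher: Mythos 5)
Your overall strategy matches the paper's: choose $B=S^{n-1}$ bounding a Legendrian disk inside a Darboux ball, observe that the modification is local and does not change $\xi$ near $B$ (so the rotation can be computed with respect to the old structure), reduce the rotation class to an element of $\pi_{n-1}(\U(n))$, note that dimensions $4m+3$ are free by Remark~\ref{rmk:pi=0}, and split the remaining class into a ``core'' piece (killed by taking $B$ inside a Legendrian disk, so that the two embeddings agree along $TB$ and the comparison matrix is upper triangular with a single interesting entry $\kappa\colon B\to\CC^*$) and a ``leaf-direction'' piece. For $2n+1>5$ this finishes the argument exactly as in the paper, since $\kappa\colon S^{n-1}\to\CC^*$ is nullhomotopic when $n-1\ge 2$.

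The gap is in dimension $5$, and it is exactly the step you flag as ``a not-entirely-formal computation of a clutching invariant.'' You assert that the leaf-direction contribution is ``a fixed, $B$-independent loop in $\U(n)$'' which ``direct inspection of the standard overtwisted-disk model shows is contractible.'' This is not justified and is not how the difficulty resolves: when $B=S^1$, the loop $\kappa\colon S^1\to\CC^*$ compares the leaf direction $\{\theta=\mathrm{const}\}$ of the plastikstufe to the normal direction of $B$ inside the Legendrian disk, and its class in $\pi_1(\CC^*)\cong\ZZ$ depends on the choice of identification of a neighborhood of $B$ with $T^*B\times S^1\times D^2$ made in the Etnyre--Pancholi construction, i.e.\ on the trivialization of the conformal symplectic normal bundle. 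There is no canonical ``default'' choice for which this class is visibly zero, and the $n=1$ analogy ($\pi_0(\U(1))$ trivial) says nothing about $\pi_1(\U(2))\cong\ZZ$. The paper's resolution requires an additional idea: precomposing the model identification with the contactomorphism $F_a(p,q,t,re^{i\theta})=(p-ar^2,\,q,\,t,\,re^{i(\theta+aq)})$, which preserves the contact form and the core but rotates the leaf direction $a$ times around $B=S^1$, thereby shifting the rotation class by an arbitrary element of $\pi_1(\U(2))$. One then \emph{chooses} $a$ to match the class of the flat annulus, rather than proving the untwisted construction already does so. Without this (or some equivalent computation pinning down the class of the default construction), the dimension-$5$ case of the proposition is not established.
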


\begin{proof}
  The only part of the statement which needs justification is that we can
  achieve triviality of the rotation class.
  Indeed, choose a Legendrian disk~$D$ in $(M^{2n+1},\xi)$ and apply
  the construction from the proof of Theorem~\ref{theorem:
    Etnyre-Pancholi} to any cooriented codimension~$1$ submanifold~$B$
  of $D$.
  This $B$ is automatically isotropic, and its symplectic normal
  bundle is spanned by $(X_n, JX_n)$ where $X_n$ denotes a vector
  field in $D$ that is transverse to $B$, and $J$ is a compatible
  almost complex structure on $\xi$.
  Since we can do this construction inside a small ball around the
  Legendrian disk~$D$, the resulting contact structure~$\xi'$ will
  contain a small plastikstufe~$\PP_B$.
  Assume for the rest of the proof that $B$ is a small
  sphere~$S^{n-1}$ that bounds a Legendrian disk in $L$.
  (See Remark~\ref{rmk: general-B} below for discussion of the more
  general case.)
  By Remark~\ref{rmk:pi=0}, the rotation class of a small plastikstufe
  with spherical core necessarily vanishes in contact manifolds of
  dimension $4m+3$, but we still need to study the remaining
  dimensions~$4m+1$.
  Below, we will first show that the rotation for any plastikstufe
  constructed by Theorem~\ref{theorem: Etnyre-Pancholi} 
  (on a core $B=S^{n-1}$ that bounds a Legendrian
  disk before creating the plastikstufe) is trivial if $\dim M  >
  5$. (This phenomenon already manifested itself in the proof of Theorem \ref{theorem: p-loose}, where  any two framings on isotropic spheres were 
  automatically homotopic if $\dim M>5$.)
  Finally, we will show that using the Etnyre--Pancholi
  theorem~\ref{theorem: Etnyre-Pancholi} in dimension~$5$ with more
  care, we can produce plastikstufes with arbitrary rotation class,
  including trivial rotation.
  We can assume that the modification to $\xi'$ has been done in a
  sufficiently small neighborhood of $S^{n-1}$ so that $\xi' = \xi$
  both close to the center of the auxiliary Legendrian disk~$D$
  chosen above, and outside of an embedded ball.
  Moreover, the Etnyre--Pancholi modification does not change the
  contact structure near the chosen isotropic submanifold~$B$.
  Thus, close to $B$ the leaves of the resulting plastikstufe are also
  Legendrian with respect to the \emph{old} contact structure~$\xi$.
  Since $\xi$ and $\xi'$ are homotopic, 
  by  Remark~\ref{rmk: almost_complex_structures formal_homotopy} 
  it suffices to
  examine the rotation of a leaf with respect to $\xi$.
% 

 % \marginpar{\tiny{\red{The core of the plastikstufe really is the
   %     submanifold~$B$!}}}
%
  Choose a leaf ribbon near $B$ and compare its rotation to the
  punctured Legendrian disk $D - \{0\}\cong B \times I$.
  Recall that by Remark~\ref{rmk: rot-class} we can trivialize $\xi$
  over a ball containing the Legendrians, and then obtain the rotation
  class as the homotopy class of the map~$\varphi\colon B \times I \to
  \GL(n,\CC)$ that satisfies Equation~\eqref{eq: phi}, where $h$ and
  $g$ denote the two embeddings we wish to compare.
  In fact, because $B \times I$ deformation retracts to $B$, it
  suffices to study the restriction of this map to $B\times \{1/2\}$.
  Then the two embeddings are the same over $B$ and only differ in the
  remaining direction.
  Over $B$ the tangent bundle to the leaf ribbon splits as $TB \oplus
  \langle X_l \rangle$, where $X_l$ is the leaf direction, and the
  tangent bundle to the flat annulus splits as $TB \oplus \langle X_n
  \rangle$, where $X_n$ is the normal to $B$ in the disk~$D$.
%
% \marginpar{\tiny{\red{I think we have to put the stars at the right
%        column, instead of 0's?  Unfortunately in the process, I tried
%        to improve the matrix, but I think it looks much worse than
%        before!!!}}}
  Therefore, for any $q\in B$ we have
  \begin{equation*}
    \varphi(q) = 
    \begin{pmatrix}
        \mbox{\Huge ${\mathrm {Id}}$} &  \begin{array}{c} * \\ \vdots \\ *
        \end{array}\\
        0 \dotsb 0 & \kappa(q) 
      \end{pmatrix}
  \end{equation*}
  where $\kappa\colon B \to \CC^*$ is a continuous function.
  This helps us understand the homotopy class of $\varphi$ in $[B,
  \GL(n, \CC)]$.
  We consider two cases.
  {\em Case 1.} Suppose $2n+1 >5$.
  Then the function $\kappa \colon B\to \CC^*$ is homotopic to $1$,
  since by assumption $B=S^{n-1}$ with $n-1 \geq 2$.
  It follows that $\varphi$ represents the trivial class in $[B,
  \GL(n, \CC)]$, so the plastikstufe~$\PP_B$ has trivial rotation.
  {\em Case 2.} Suppose $2n+1=5$.
  Now, the function $\kappa$, and therefore $\varphi$, may represent a
  non-trivial homotopy class. It turns out that this class depends on
  the choice of coordinates in the Etnyre--Pancholi construction.
  A reparametrization will allow us to adjust the rotation class. To
  see this, we first review the setup of the construction of
  \cite{EP}.
  Given an isotropic manifold~$B$ with trivial symplectic conormal
  bundle, we use the isotropic neighborhood theorem to identify a
  neighborhood of $B$ in $(M, \xi)$ with $T^*B \times \RR^3$, equipped
  with the contact form $p\, dq + dz + r^2\, d\theta$.
  Choosing a transverse curve~$\gamma(t)$ in $\RR^3$, we can further
  identify a neighborhood of $B \times \gamma$ in $T^*B \times \RR^3$
  with
  \begin{equation*}
    \bigl(T^*B \times S^1 \times D^2,
    \ker (p\, dq + dt + r^2 d\theta)\bigr).
  \end{equation*}
  Here, $B$ is the $0$-section of $T^*B$, $p\, d q$ is the canonical
  $1$-form on $T^* B$, $S^1$ is the transverse curve $\gamma(t)$, and
  $D^2$ is an open disk with polar coordinates~$(r, \theta)$.
  Since $\dim M = 5$, we have $B=S^1$, and the coordinate~$q$
  varies in the unit circle.
  The generalized Lutz twist from the Etnyre--Pancholi construction
  produces in each of the slices $T^*B \times \{t_0\} \times D^2$ a
  plastikstufe~$\PP_{t_0}$ whose core is $B \times \{t_0\}$, where $B$
  is the zero section as before.
  Near the core, the leaves of $\PP_{t_0}$ are given by $\{\theta =
  \text{const} \}$.
  We can define for any $a\in \ZZ$ a contactomorphism
  \begin{equation*}
    F_a\colon T^*B \times S^1 \times \RR^2 \to
    T^*B \times S^1 \times \RR^2
  \end{equation*}
  by $F_a(p, q, t, r e^{i\theta}) = \bigl(p-a r^2, q, t, r
  e^{i\,(\theta + a q)}\bigr)$ that preserves $p\, dq + dz + r^2\,
  d\theta$.
  Note that the zero section $\{p=0, r=0\}$ is preserved by $F_a$.
  Instead of applying now the generalized Lutz twist of \cite{EP} to
  the initial model neighborhood of $B$ we have chosen in the
  Legendrian disk~$D$ above, we use first a restriction of the
  map~$F_a$ to modify the coordinates of the model neighborhood.
  This means that for a fixed $t_0$, the construction will produce a
  plastikstufe with the same core for either choice of coordinates.
  The leaf direction near the binding, however, is different: we get
  $\{\theta= \text{const}\}$ for the plastikstufe $\PP$ constructed
  with the old coordinates, and $\theta'= \theta + a q =\text{const}$
  for $\PP'$ constructed in the new coordinates.
  Intuitively, the leaf direction of $\PP'$ rotates $a$ times with
  respect to the leaf direction of $\PP$ as we move around the circle
  $S^1=B$.
  We now compute the relative rotation class between $\PP$ and $\PP'$.
  For this, we compare two trivializations of the bundle $\xi \to
  B=S^1$ corresponding to the (Legendrian) leaves of our
  plastikstufes.
  With a choice of a compatible almost complex structure~$J$ on $\xi$,
  each plastikstufe yields a trivialization of $\restricted{\xi}{B}$
  given by
  \begin{equation*}
    \text{(core $S^1$-direction}, J(\text{core $S^1$-direction}),
    \text{leaf direction}, J(\text{leaf direction})).
  \end{equation*}
  Since the $S^1$-direction for $\PP$ and $\PP'$ is the same and the
  leaf directions differ as described above, the two trivializations
  are related by the map $\varphi\colon S^1 \to \U(2)$,
  \begin{equation*}
    \varphi(q) = 
    \begin{pmatrix}
      1 & 0 \\
      0 &  2\pi i a q 
    \end{pmatrix} .
  \end{equation*}
  Different values of $a$ give maps lying in different homotopy
  classes in $[S^1, \U(2)]= \pi_1 (\U(2))=\ZZ$.
  More precisely, we can realize any given element in $\pi_1 (\U(2))$ by
  an appropriate choice of $a$.
  In particular, we can realize the class given by a flat Legendrian
  annulus (i.e. the one contained in the Legendrian disk~$D$).
  It follows that by an appropriate choice of coordinates,
  we can construct a plastikstufe with trivial rotation class.
\end{proof}

\begin{remark}
  The previous proposition essentially shows that the Etnyre--Pancholi
  construction can be used to produce a plastikstufe with an arbitrary
  given rotation class.
  Indeed, any rotation class of the core can be realized by Gromov's
  $h$-principle for subcritical isotropic embeddings.
  In the leaf direction, rotation is always trivial in dimensions
  higher than $5$; in dimension~$5$, any given rotation can be
  realized by an appropriate choice of coordinates in the
  Etnyre--Pancholi construction.
  We leave the details to the reader as we have no use for
  plastikstufes with non-trivial rotation.
\end{remark}

\begin{remark}\label{rmk: general-B}
  The construction above could be generalize in a relatively easy way
  for a small plastikstufe~$\PP_B$ with core~$B^{n-1}$, as long as
  $B^{n-1}$ can be (smoothly) embedded into $\RR^n$.
  To simplify the presentation, we have chosen to restrict to the
  spherical case, pointing instead to Question~\ref{quest: true for
    BLOB}.
\end{remark}

%%%%%%%%%%%%%%%%%%%%%%%%%%%%%%%%%%%%%%%%%%%%%%%%%%%%%%%%%%%%%%%%%%%%

\section{Flexible Weinstein manifolds}
\label{sec:loose}

Let $(W^{2n+2}, d\lambda)$ be an exact symplectic manifold.
Given an exhausting Morse function $\varphi\colon W \to \RR$, we say
the triple $(W, \lambda, \varphi)$ is a \defin{Weinstein manifold} if
the vector field~$V$ determined by $V\, \lrcorner \, d\lambda =
\lambda$ is gradient-like for the Morse function~$\varphi$.
(If $\varphi$ has finitely many critical points, we say that $W$ has
finite type.)
It follows that $\lambda$ is a contact form on any non-critical level
set~$\varphi^{-1}(c)$.
Similarly, $(W, \lambda, \varphi)$ is a \defin{Weinstein cobordism}
from $(Y_0, \xi_0)$ to $(Y_1, \xi_1)$ if $\d W = Y_1 \cup - Y_0$,
where $Y_0$ and $Y_1$ are level sets for $\varphi$, and $V$ points
inwards along $Y_0$ and outwards along $Y_1$.
If $Y_0 = \emptyset$, $W$ is called a \defin{Weinstein domain}.
Observe that because $\lie{V} \lambda = \lambda$, the flow of the
vector field~$V$ expands $\lambda$ exponentially.
If $p \in W$ is a critical point of $\varphi$, the \defin{descending
  manifold} $C$ of $p$ consists of all the points in $W$ which
converge to $p$ by the flow of $V$.
%
%Let $C$ be the descending manifold of $p$.
%
Since any vector in $TC$ converges to the zero vector at the point~$p$
under the flow of $V$, but this flow also expands $\lambda$
exponentially, we conclude that $\restricted{\lambda}{C} = 0$.
Thus any descending manifold is an isotropic submanifold of $W$, and
furthermore it intersects every level set in a contact isotropic
submanifold.
In particular, every critical point of $\varphi$ must have index less
than or equal to $n+1$.
A Weinstein cobordism of dimension $2n+2 > 4$ is called
\defin{flexible} if every descending manifold of index $n+1$
intersects the lower level sets along a \emph{loose} Legendrian sphere
(in other words, attaching spheres for all handles of index $n+1$ are
loose Legendrians.)
Here, we will assume for simplicity that all critical values of
$\varphi$ are distinct.
Using the $h$-principle result from \cite{Mur}, it is shown in
~\cite{CE} that flexible Weinstein cobordisms can be completely
classified by their topology.
There are several related theorem proved in \cite[Chapter 14]{CE}; we
state two results (in the form relevant to our case).
\begin{theorem}[\cite{CE}]\label{thm: symphcob}
  Let $(W, \lambda, \varphi)$ be a flexible Weinstein cobordism from
  $(Y_0, \ker \alpha_0)$ to $(Y_1, \ker \alpha_1)$, such that $W$ is
  smoothly a product cobordism.
  Then after a homotopy through Weinstein structures, the cobordism
  $(W, \lambda, \varphi)$ is isomorphic to $(Y_0 \times \RR, e^t
  \alpha_0, t)$.
  In particular, $(Y_0, \ker \alpha_0)$ is contactomorphic to $(Y_1,
  \ker \alpha_1)$. \qed
\end{theorem}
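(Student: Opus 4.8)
The plan is to follow Cieliebak--Eliashberg \cite{CE} and deform $(W,\lambda,\varphi)$, through Weinstein structures, until $\varphi$ has no critical points at all; a Weinstein cobordism with no critical points is automatically the trivial collar. The deformation is carried out by realizing a sequence of smooth handle moves inside the Weinstein category, where subcritical handles are governed by Gromov's $h$-principle and the top-index handles by Murphy's $h$-principle for loose Legendrians.

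First I would put the Weinstein structure in standard handle form. Using the Weinstein handle calculus of \cite{CE} --- existence of Weinstein handles, creation and cancellation of pairs of critical points of consecutive index, handle-slides implemented by isotropic isotopies of attaching spheres --- and after a homotopy through Weinstein structures, one may assume the critical points of $\varphi$ occur in order of increasing index, so $W$ is obtained from $Y_0\times\RR$ by attaching subcritical handles (index $\le n$) and then critical handles (index $n+1$), whose attaching spheres are Legendrian $S^n$'s. Since $W$ is smoothly a product cobordism $Y_0\times[0,1]$, the projection $t\colon Y_0\times[0,1]\to[0,1]$ is a Morse function without critical points, and a generic path of functions connects $\varphi$ to it; this path passes through finitely many births, deaths and handle-slides, which gives a recipe for trivializing the handle decomposition smoothly (after first inserting cancelling Weinstein pairs so that both ends of the path are in a common stabilized form).

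Then I would realize these moves one at a time. Births and deaths of a pair of critical points are always available in the Weinstein category \cite{CE}. Slides and cancellations of subcritical handles are automatic: the attaching sphere of a subcritical handle is a \emph{subcritical} isotropic embedding, so by Gromov's $h$-principle \cite{PDR} any formal isotropy is realized by a genuine isotropic isotopy, and in particular the attaching sphere can be moved into cancelling position (meeting the relevant belt sphere transversally in one point). The only serious point is the cancellation of an index-$n$ handle against an index-$(n+1)$ handle: here the attaching sphere $\Lambda$ of the top handle is a Legendrian sphere in the contact level set lying just above the index-$n$ handle, and one must Legendrian-isotope $\Lambda$ so that it meets the belt sphere of the index-$n$ handle in a single transverse point. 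Because the cobordism is smoothly a product, the algebraic intersection number is $\pm1$, and in the ambient dimension $2n+1\ge 5$ the geometric intersection can be reduced accordingly and the \emph{formal} Legendrian data matched to this configuration. Since $\Lambda$ is \emph{loose} by hypothesis, Murphy's $h$-principle \cite{Mur} promotes this formal Legendrian isotopy to an honest one, after which the Weinstein pair cancels; looseness of the remaining index-$(n+1)$ attaching spheres persists (or is re-established in the larger chart freed up by the cancellation), so the procedure iterates.

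After all critical points have been cancelled, $V$ is everywhere transverse to the level sets, and its flow from $Y_0$ identifies $(W,\lambda,\varphi)$ with $(Y_0\times\RR,\,e^t\alpha_0,\,t)$ up to a final homotopy making $\varphi$ equal to $t$; in particular $(Y_0,\ker\alpha_0)$ is contactomorphic to $(Y_1,\ker\alpha_1)$. The main obstacle is exactly the index-$n$ versus index-$(n+1)$ step: one must ensure, at each such cancellation, both that the formal Legendrian isotopy class genuinely permits it --- a smooth-topological and bundle-theoretic bookkeeping in which the triviality of the smooth cobordism is essential --- and that looseness is available throughout; everything of lower index is handled by the classical subcritical $h$-principle and is comparatively routine.
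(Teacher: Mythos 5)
This theorem is imported from \cite{CE} as a black box (note the \qed attached to the statement); the paper offers no proof of its own, so there is nothing internal to compare against. Your sketch is a faithful outline of the Cieliebak--Eliashberg argument: standard handle form, a Cerf-theoretic path from $\varphi$ to the projection $t$ supplying the births, deaths and slides, Gromov's $h$-principle for the subcritical moves, and Murphy's $h$-principle for loose Legendrians at the index-$n$/index-$(n+1)$ cancellations. One small wobble: your appeal to ``algebraic intersection number $\pm1$ plus the Whitney trick'' is both unnecessary and unavailable in general (no $\pi_1$ hypothesis on $Y$ is made); the generic path of functions already hands you the smooth cancelling positions, and the only task is to match the formal Legendrian data and invoke looseness, which is exactly what \cite{CE} does. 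The other point you flag --- that looseness of the remaining critical attaching spheres must persist through the moves --- is indeed the place where \cite{CE}'s definition of flexibility (looseness \emph{in the complement of the other attaching spheres}) earns its keep.
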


\begin{theorem}[\cite{CE}]\label{thm: two-Wstructures}
  Let $(W, \lambda_0, \varphi_0)$ and $(W, \lambda_1, \varphi_1)$
  be two flexible Weinstein structures on a fixed manifold~$W$.
  Suppose that $\omega_0 = d\lambda_0$ and $\omega_1 = d\lambda_1$ are
  homotopic through non-degenerate $2$-forms.
  Then there exists a homotopy of Weinstein structures connecting
  $(W_0, \lambda_0, \varphi_0)$ and $(W_1, \lambda_1, \varphi_1)$. \qed
\end{theorem}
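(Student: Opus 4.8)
The plan is to prove this as the \emph{uniqueness} half of the flexible Weinstein $h$-principle: the hypothesis that $\omega_0 = d\lambda_0$ and $\omega_1 = d\lambda_1$ are homotopic through non-degenerate $2$-forms is precisely the \emph{formal} data, and the task is to upgrade this formal homotopy to a genuine homotopy of Weinstein structures. The strategy is to reduce the problem to a handle-by-handle comparison of the two Weinstein handlebody presentations of $W$, and then to realize the Morse-theoretic moves relating these presentations by elementary Weinstein homotopies. First I would record what the formal homotopy supplies. A path $\omega_t$ of non-degenerate $2$-forms determines a path of compatible almost complex structures $J_t$ on $W$, hence a path of formal (almost-Weinstein) data interpolating between the two structures. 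Choosing generic exhausting Morse functions adapted to each Weinstein structure yields two Weinstein handle decompositions in which every critical point of index $\le n$ is subcritical and every critical point of index $n+1$ has a \emph{loose} Legendrian attaching sphere in the relevant level set; looseness here is exactly the flexibility built into the statement.

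Next I would connect the two Morse functions $\varphi_0, \varphi_1$ through a generic path of functions. By standard Cerf theory such a path consists of Morse functions away from finitely many moments of birth/death of a cancelling pair of critical points, or of crossings of critical values (handle slides). Each elementary move must be promoted to a Weinstein homotopy respecting the gradient-like condition for $V$. Creation and cancellation of subcritical handles, as well as isotopies of subcritical attaching spheres, are handled directly by Gromov's $h$-principle for subcritical isotropic embeddings (quoted in Section~\ref{sec:hprinciples}), since such embeddings obey the full $h$-principle. Handle slides and smooth isotopies of the decomposition are likewise realized by ambient contact isotopies of the intermediate level sets.

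The main obstacle, and the place where the looseness hypothesis is essential, is the index $n+1$ (critical) handles. Their attaching spheres are genuine closed Legendrian spheres in the intermediate contact level sets, and for these the $h$-principle fails in general. The flexibility assumption guarantees that these attaching spheres are loose, so Murphy's Legendrian $h$-principle (the theorem of \cite{Mur} recalled above) applies: any two formally isotopic loose Legendrians are genuinely Legendrian isotopic. The formal matching of the top-index attaching spheres is supplied by the homotopy $\omega_t$, since a compatible $J_t$ gives precisely the homotopy of the Lagrangian (totally real) tangent data along the attaching spheres that constitutes the formal isotopy hypothesis. Thus each such formal isotopy can be upgraded to an honest Legendrian isotopy, which in turn is realized by a Weinstein homotopy of the corresponding critical handle attachment.

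Concatenating these elementary Weinstein homotopies — subcritical creation/cancellation, handle slides, and loose-Legendrian isotopies of the critical attaching spheres — yields the desired homotopy of Weinstein structures connecting $(W, \lambda_0, \varphi_0)$ and $(W, \lambda_1, \varphi_1)$. The hard part will be the bookkeeping: arranging that looseness of every index $n+1$ attaching sphere is preserved throughout the \emph{entire} deformation (so that Murphy's theorem remains applicable at every stage), and that the formal Legendrian isotopy data is consistently provided by the path $\omega_t$ as the handle structure is modified. This is exactly the organizational work carried out in \cite[Chapter~14]{CE}, to which we defer for the full argument.
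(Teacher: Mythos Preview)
The paper does not prove this statement at all: it is quoted from \cite{CE} and marked with a \qed\ immediately after the statement, with no argument given. Your proposal therefore goes well beyond what the paper does---you are sketching the actual argument from \cite[Chapter~14]{CE} rather than comparing to anything in the present paper.

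As a sketch of the Cieliebak--Eliashberg proof, your outline is broadly correct: the uniqueness half of the flexible Weinstein $h$-principle does proceed by connecting the Morse data through a generic path, realizing Cerf-theoretic moves (birth/death, handle slides) by Weinstein homotopies, handling subcritical attaching spheres via Gromov's $h$-principle, and invoking Murphy's theorem for the loose Legendrian attaching spheres of index $n+1$ handles. You also correctly identify that the real work lies in preserving looseness throughout the deformation and in extracting the formal Legendrian isotopy data from the path $\omega_t$. Since you explicitly defer to \cite{CE} for this bookkeeping, your proposal is in effect the same as the paper's---a citation---just with more commentary on what is being cited.
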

Now we are ready to prove Theorem~\ref{theorem: exo=std} from
the Introduction:

\begin{proof}[Proof of Theorem~\ref{theorem: exo=std}]
  We will prove the theorem under the assumption that hypothesis~(2)
  is satisfied, i.e.  $(Y_0, \xi_0)$ and $(Y_1, \xi_1)$ are the ends of a
  Stein cobordism~$W$ smoothly equivalent to a product cobordism.
As a first step we consider the induced Weinstein structure on the 
cobordism, and apply Theorem~\ref{thm: symphcob}.
  (The proof that hypothesis~(1) implies the isomorphism of the
  contact structures works in exactly the same way, except that we use
  Theorem~\ref{thm: two-Wstructures} instead of Theorem~\ref{thm:
    symphcob}).
  Let $(M^{2n+1}, \xi_\PS)$ be a $PS$-overtwisted manifold containing
  a plastikstufe $\PP$ satisfying the hypotheses of
  Theorem~\ref{theorem: p-loose}.
  By connect summing this manifold onto each level set of $W$ (away
  from all descending manifolds) we obtain a new Weinstein
  cobordism~$\tilde{W}$ from $(Y_0, \xi_0) \connsum (M, \xi_\PS)$ to
  $(Y_1, \xi_1) \connsum (M, \xi_\PS)$.
  The cobordism $\tilde{W}$ has the same critical points as $W$, and
  each descending manifold is disjoint from $M$ (and in particular,
  from the plastikstufe $\PP$) at every level set.
  By Theorem~\ref{theorem: p-loose} this implies that all descending
  manifolds of index~$n+1$ intersect all level sets in loose
  Legendrians, therefore $\tilde{W}$ is a flexible Weinstein
  cobordism.
  Since $\tilde{W}$ is smoothly equivalent to the product cobordism
  $(Y\connsum M) \times [0,1]$, we can complete the proof by applying
  Theorem~\ref{thm: symphcob}.
\end{proof}

\section{Open questions}

The questions in this section are not new, but with the results of
this paper they may be formulated in a more precise way.
Above we have  showed
 that certain $PS$-overtwisted manifolds are
flexible within the class of $PS$-overtwisted manifolds being Stein
cobordant by a \emph{topologically} trivial cobordism.

\begin{question} Even if the general flexibility question seems  to be out of reach, it would be interesting 
to tackle more special cases. For example: 

 (a)  Is it  true that the connected sum of an Ustilovsky
  sphere (cf. \cite{ustilov})
    with a suitable $PS$-overtwisted manifold is contactomorphic
  to the $PS$-overtwisted manifold? Note that in this case, the two contact manifolds 
  are cobordant via a topologically
  non-trivial Stein cobordism.
  
  (b) Take a model $PS$-overtwisted sphere $(S^{2n+1}, \xi_{PS})$. Is it true that $(S^{2n+1}, \xi_{PS} \connsum \xi_{PS})$ is contactomorphic  to 
   $(S^{2n+1}, \xi_{PS})$? 
   
\end{question}

\begin{question} \label{quest: true for
    BLOB}
     Can we remove the technical conditions in Theorem \ref{theorem: p-loose} and prove similar 
     statements for arbitrary plastikstufes, or, more generally, 
for  arbitary bLobs? (Bordered Legendrian open books, or bLobs for short, 
were introduced in \cite{MNW} to generalize the notion of $PS$-overtwistedness.)

It is not hard to see that the arguments of this paper go through in the presence of a ``large'' open 
ball in $\RR^3_{ot}\times \RR^{2n}$. Moreover, some properties of such ball \cite{NP} resemble 
those 
of a loose chart. Is it true that a neighborhhood of an arbitrary plastikstufe or bLob contains such an open ball? 
\end{question}

A further  question is about finding the correct definition of
overtwistedness in higher dimensions. A very useful approach to studying contact structures in all dimensions is provided by open book decompositions \cite{GiOB}.

\begin{question} Can our results be interepreted in terms of open books to obtain some type of flexibility in that context?
\end{question}

\end{document}